\newtheorem{theorem}{Theorem}[section]
\newtheorem{corollary}{Corollary}
\newtheorem{lemma}[theorem]{Lemma}\newtheorem*{lem}{Lemma}
\newtheorem{proposition}{Proposition}
\theoremstyle{definition}
\newtheorem{definition}[theorem]{Definition}
\newtheorem{remark}{Remark}
\newtheorem{ques}{Question}
\newtheorem{exa}{Example}
\newcommand{\Ac}{\mathcal {A}}
\newcommand{\Lc}{\mathcal {L}}
\newcommand{\Dc}{\mathcal {D}}
\newcommand{\Q}{\mathbb {Q}}
\newcommand{\Sc}{\mathcal {S}}
\newcommand{\Oc}{\mathcal {O}}
\newcommand{\Hc}{\mathcal {H}}
\newcommand{\Pc}{\mathcal {P}}
\newcommand{\Rc}{\mathcal {R}}
\newcommand{\Vc}{\mathcal {V}}
\newcommand{\N}{\mathbb {N}}
\newcommand{\T}{\mathbb {T}}
\newcommand{\Tc}{\mathcal {T}}
\newcommand{\R}{\mathbb {R}}
\newcommand{\Z}{\mathbb {Z}}
\newtheorem*{propo}{Proposition}
\title[invariant submanifolds of symplectic    dynamics]
      {When are the invariant submanifolds of symplectic    dynamics Lagrangian?}
\author[Marie-Claude Arnaud]{}
\subjclass{Primary: 37J05; Secondary: 70H03, 70H05, 37J50, 37J10}
 \keywords{Symplectic dynamics, Lagrangian dynamics, invariant submanifolds, Lagrangian submanifolds, minimizing submanifolds.}
 \email{Marie-Claude.Arnaud@univ-avignon.fr}
\begin{document}
\maketitle

\centerline{\scshape Marie-Claude Arnaud}
\medskip
{\footnotesize
 \centerline{Avignon University, LMA EA 2151}
   \centerline{F-84000, Avignon, France}
    \centerline{Member of the Institut universitaire de France}
   \centerline {supported by ANR-12-BLAN-WKBHJ}
} 

\bigskip

 \centerline{(Communicated by Kuo-Chang Chen)}

\begin{abstract}
Let $\Lc$ be a  $D$-dimensional submanifold   of a $2D$ dimensional exact symplectic manifold  $(M, \omega)$ and let $f: M\rightarrow M$ be a symplectic diffeomorphism. In this article, we deal with the link between the dynamics $f_{|\Lc}$ restricted to $\Lc$ and the geometry of $\Lc$ (is $\Lc$ Lagrangian, is it smooth, is it a graph\dots?).

 We prove different kinds of results.
\begin{enumerate}
\item for $D=3$, we prove that is $\Lc$ if a torus that carries some characteristic loop, then either $\Lc$ is Lagrangian or $f_{|\Lc}$ can not be  minimal (i.e. all the orbits are dense) with $(f^k_{|\Lc})$ equilipschitz;
\item for a Tonelli Hamiltonian of $T^*\T^3$, we give an example of an invariant    submanifold $\Lc$ with no conjugate points  that is not Lagrangian and such that for every $f:T^*\T^3\rightarrow T^*\T^3$ symplectic, if $f(\Lc)=\Lc$, then $\Lc$ is not minimal;
\item with some hypothesis for the restricted dynamics, we prove that some invariant Lipschitz $D$-dimensional submanifolds of Tonelli Hamiltonian flows are in fact Lagrangian, $C^1$ and graphs;
\item we give similar results for $C^1$ submanifolds with weaker dynamical assumptions.
\end{enumerate}

\end{abstract}

\section{Introduction}

When studying smooth symplectic  dynamical systems, we are often led to look for  their invariant submanifolds.  In the symplectic setting, mathematicians generally ask that the invariant submanifold in question is Lagrangian. But why?

One possible reason is the following result due to Michel Herman (see \cite{He1}):
\begin{propo}(M.~Herman) Let $F$ be a   symplectic $C^1$ diffeomorphism of an exact symplectic $2d$-dimensional manifold $(M, \omega)$ and let $\Tc\subset M$ be a $C^1$ invariant $d$-dimensional torus. Assume that the restricted dynamics $f_{|\Tc}$ is $C^1$-conjugated to an ergodic rotation of $\T^d$. Then $\Tc$ is Lagrangian.
\end{propo}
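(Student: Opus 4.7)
The strategy is to transport the question from $\Tc$ to the model torus $\T^d$ via the conjugacy, where the rotation will force the pulled-back form to have constant coefficients, and then to use exactness to kill those constants. Concretely, write $\omega = d\lambda$ and let $h: \T^d \to \Tc$ be a $C^1$ diffeomorphism conjugating $F_{|\Tc}$ to the ergodic rotation $R_\alpha(x) = x + \alpha$. Introduce the continuous $2$-form $\beta := h^*\omega$ on $\T^d$; since $h$ is a diffeomorphism, $\Tc$ is Lagrangian if and only if $\beta \equiv 0$.

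The dynamics now enters: from $F^*\omega = \omega$ and $F \circ h = h \circ R_\alpha$, one gets $R_\alpha^*\beta = \beta$. Expanding in the flat coordinates of $\T^d$, write $\beta = \sum_{i<j} b_{ij}\, dx_i \wedge dx_j$. Since translations preserve each $dx_i \wedge dx_j$, the invariance $R_\alpha^*\beta = \beta$ reduces to $b_{ij} \circ R_\alpha = b_{ij}$ for all $i<j$. An ergodic rotation of the torus is uniquely ergodic (it is minimal, with Haar measure as its unique invariant probability measure), so its only continuous invariant functions are constants; hence each $b_{ij}$ equals some constant $c_{ij}$.

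Finally, exactness eliminates these constants. Because $\lambda$ is smooth and $h$ is $C^1$, the $1$-form $h^*\lambda$ is $C^1$, and the continuous $2$-form $\beta = d(h^*\lambda)$ is exact. For each pair $i<j$, integrate $\beta$ over the standard coordinate $2$-torus $T_{ij} \subset \T^d$: Stokes' theorem, applied to the $C^1$ primitive $h^*\lambda$, gives $\int_{T_{ij}}\beta = 0$, while the coordinate expression gives $\int_{T_{ij}}\beta = c_{ij}$. Hence every $c_{ij}$ vanishes, $\beta \equiv 0$, and $\Tc$ is Lagrangian. The argument is quite direct; the only point that requires mild care is the use of Stokes on $h^*\lambda$, which is legitimate because the $C^1$ hypothesis on $h$ is precisely what is needed.
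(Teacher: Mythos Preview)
The paper does not actually prove this proposition; it is quoted from Herman \cite{He1} as motivation, so there is no in-paper argument to compare against. Your approach is the standard one and is essentially correct, but the last step contains a regularity slip that you should fix.

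You write that ``because $\lambda$ is smooth and $h$ is $C^1$, the $1$-form $h^*\lambda$ is $C^1$.'' This is false: pulling back a $1$-form already costs one derivative of $h$, so $h^*\lambda$ is only continuous, and $d(h^*\lambda)$ is not classically defined. Your closing sentence (``the $C^1$ hypothesis on $h$ is precisely what is needed'') is the right intuition but for the wrong reason. The repair is painless and does not disturb the architecture of your proof. Once you have shown that $\beta$ has constant coefficients, $\beta$ is in particular smooth, and you only need its periods $\int_{T_{ij}}\beta$ to vanish. Approximate $h$ in the $C^1$ topology by smooth maps $h_\varepsilon$; then $h_\varepsilon^*\omega \to h^*\omega=\beta$ uniformly, while $\int_{T_{ij}} h_\varepsilon^*\omega=\int_{T_{ij}} d(h_\varepsilon^*\lambda)=0$ by ordinary Stokes, and you pass to the limit. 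Equivalently, since $h$ is homotopic to a smooth map, naturality of pullback on cohomology together with $[\omega]=0$ gives $[\beta]=0$ directly. Either route uses exactly the $C^1$ regularity of $h$ (to make $\beta=h^*\omega$ continuous and to allow $C^1$-approximation), which is presumably what you had in mind.
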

Under some assumptions, the K.A.M. theorem (for Kolmogorov Arnol'd Moser, see \cite{Bo}) gives the existence of a lot of such invariant tori of  symplectic dynamics.

But there may exist other invariant manifolds that are not K.A.M. tori.  The simplest example is when you consider the identity map: of course every submanifold is invariant... 

Moreover, observe that the set of $d$-dimensional $C^1$ Lagrangian tori in a $2d$-dimensional symplectic manifold has no interior in the set of  $d$-dimensional $C^1$ tori endowed with the $C^1$ topology. Hence the set of $C^1$ Lagrangian submanifolds is small.

Observe too that Lagrangian submanifolds are very flexible from the point of view of symplectic dynamics:
 consider a Lagrangian submanifold $\Tc$ of $\T^d\times \R^d$ that is homotopic to the zero section, let $g: \Tc\rightarrow \Tc$ be any diffeomorphism of $\Tc$ that is homotopic to identity. Using generating functions (see for example \cite{AA}), it is easy to extend $g$ to a symplectic diffeomorphism of $\T^d\times \R^d$: this will be done in appendix \ref{s4}. Hence the Lagrangian property allows all the possible dynamics on the submanifold: indeed, you just ask that $g$ preserves the vanishing $2$-form. If your submanifold is not Lagrangian, then your condition ``preserving the symplectic form'' is not trivial and you can hope to avoid certain dynamics, as minimal dynamics. In \cite{He1}, M.~Herman asked the following question ($T^*\T^d=\T^d\times \R^d$ is endowed with its usual symplectic form):

\begin{ques}\label{Q1} (M.~Herman) Let $F$ be a symplectic diffeomorphism of $\T^d\times \R^d
 $ that is homotopic to  Id  and let $\Tc$ be a $C^1$ torus that is:
\begin{itemize}
\item invariant by $F$;
\item  homotopic to $\{ r=0\}$;
\item such that the restricted dynamics $F_{|\Tc}$ is minimal (i.e. all its orbits are dense in $\Tc$).
\end{itemize}
 Is the torus necessarily Lagrangian?
\end{ques}
The answer to this question is yes for $d=1, 2$, but unknown for higher dimensions.

Let us think about higher dimensions. We assume that $\Lc$ is a closed and without boundary  $n$-dimensional submanifold of a $2n$-dimensional exact symplectic manifold  $(M, \omega=d\alpha)$ and that $f$ is a symplectic diffeomorphism of $M$ such that $\Lc$ is $f$-invariant and $f_{|\Lc}$ is minimal. Then at every $x\in\Lc$, $F(x)=\ker \omega_{|T_x\Lc}$ is a linear subspace of $T_x\Lc$. We denote its dimension by $p(x)$. Then $n-p$ is even, $p$ is invariant by $f$ because $f$ is symplectic and $p$ is lower semi-continuous. This implies that $p$ is constant on $\Lc$. If $n=p$, then $\Lc$ is Lagrangian. Then let us assume that $n-p=2m>0$. The following lemma, whose proof is very simple,  shows that there are some restrictions for the  characteristic bundle $\ker\omega_{|T\Lc}$:
 \begin{lem}
 Let $\Lc$ be a closed   and without boundary n-dimensional  submanifold of a 2n-dimensional exact symplectic manifold $(M,\omega=d\alpha)$. We assume that $\ker\omega_{|T\Lc}$ defines a $(n-2m)$-dimensional bundle along $\Lc$. Then no closed and without boundary submanifold  in $\Lc$ is transverse to this $(n-2m)$-dimensional bundle.
 \end{lem}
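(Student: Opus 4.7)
The plan is to apply Stokes' theorem. The key observation is that transversality forces $\omega|_N$ to be a symplectic form on a closed manifold $N$, while exactness of $\omega$ on the ambient $M$ forces the Liouville volume form of $N$ to be exact, which contradicts compactness.

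First I would unpack the transversality condition. Write $K = \ker\omega_{|T\Lc}$, a distribution of rank $n-2m$ on $\Lc$. Transversality $T_xN + K_x = T_x\Lc$ at each $x \in N$ forces $\dim N \geq 2m$; the natural and relevant case is $\dim N = 2m$, where the sum is automatically direct, $T_xN \oplus K_x = T_x\Lc$.

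Second I would verify that $\omega|_N$ is a symplectic form on $N$. Let $v \in T_xN$ be in the kernel of $\omega|_{T_xN}$. Then $\omega(v,\cdot) = 0$ on $T_xN$, and also trivially on $K_x$ since $K_x = \ker \omega_{|T_x\Lc}$. By the direct sum decomposition, $\omega(v,\cdot) = 0$ on all of $T_x\Lc$, hence $v \in T_xN \cap K_x = \{0\}$. Thus $(N,\omega|_N)$ is a symplectic manifold of dimension $2m$.

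Finally, Stokes. Since $\omega = d\alpha$ globally on $M$, its restriction satisfies $\omega|_N = d(\alpha|_N)$, so
$$(\omega|_N)^m = d\bigl(\alpha|_N \wedge (\omega|_N)^{m-1}\bigr).$$
The left-hand side is a volume form on the closed $2m$-dimensional symplectic manifold $N$, so $\int_N (\omega|_N)^m \neq 0$; but $N$ has no boundary, so Stokes gives $\int_N d(\cdot) = 0$, a contradiction. The only mildly delicate point is the transversality case $\dim N > 2m$: there $\omega|_N$ still has constant rank $2m$ with integrable kernel $TN \cap K$ (Frobenius applied to the closed form $\omega|_N$), so a local transversal section reduces this to the $\dim N = 2m$ case treated above. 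Otherwise the argument is a textbook application of Stokes.
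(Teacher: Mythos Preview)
Your core argument for the case $\dim N = 2m$ is correct and is exactly the paper's proof: transversality makes $(N,\omega|_N)$ exact symplectic, and Stokes on the closed manifold $N$ forces $\int_N \omega^m = 0$, contradicting nondegeneracy.

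The final paragraph, however, is both unnecessary and flawed. First, a \emph{local} transversal section to the kernel foliation is not a closed manifold, so you cannot feed it back into the Stokes argument; the reduction does not go through as stated. Second, and more to the point, the lemma is simply false under the weaker reading $\dim N > 2m$: take $N = \Lc$ itself, which trivially satisfies $T_x\Lc + K_x = T_x\Lc$. The paper's proof (and its application in the subsequent corollary, where one constructs a $2$-dimensional section of a circle fibration in a $3$-torus) makes clear that ``transverse'' here means a submanifold of complementary dimension $2m$, i.e.\ $T_xN \oplus K_x = T_x\Lc$. So you should drop the last paragraph entirely and state the dimension hypothesis $\dim N = 2m$ up front.
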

 If such a submanifold $N$ exists, then $(N, \omega)$ is exact symplectic, closed and without boundary. By Stoke's theorem, this is impossible:
$$ \int_N\omega^{\wedge m}=\int_Nd\alpha\wedge (\omega^{\wedge (m-1)})=\int_Nd(\alpha\wedge \omega^{\wedge (m-1)})=\int_{\partial N=\emptyset}\alpha\wedge \omega^{\wedge (m-1)}=0.$$
There is a case where it is easy to build such a transverse section to the characteristic subbundle:
\begin{corollary}\label{Co0}
 Let $\Tc$ be an embedded $3$-dimensional torus  of a 6-dimensional exact symplectic manifold $(M,\omega=d\alpha)$. Then the following assertion is impossible:

  $\ker\omega_{|T\Lc}$ defines a $1$-dimensional bundle in the tangent space of  $\Tc$ and the corresponding  characteristic leaves
 define a   locally trivial fibration in circles of $\Tc$.

\end{corollary}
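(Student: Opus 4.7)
The plan is to argue by contradiction. Suppose $\ker \omega_{|T\Tc}$ is a line field on $\Tc \cong \T^3$ whose leaves form a locally trivial fibration $\pi : \Tc \to B$ with circle fibers over some closed $2$-manifold $B$. My aim is to exhibit a closed, embedded surface $N \subset \Tc$ transverse to this line field; the preceding lemma, applied with $n = 3$ and $m = 1$, will then supply the contradiction.

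The first step is to identify $B$. The long exact sequence of homotopy groups of $S^1 \hookrightarrow \Tc \to B$ shows that $\pi_1(B)$ is a quotient of $\pi_1(\Tc) = \Z^3$, hence abelian, so $B$ must be one of $S^2$, $\mathbb{RP}^2$, or $\T^2$. The possibility $B = S^2$ is excluded because then the boundary map $\pi_2(S^2) \to \pi_1(S^1)$ would exhibit $\Z^3$ as a quotient of $\Z$; the possibility $B = \mathbb{RP}^2$ is excluded because the sequence would exhibit $\Z^3$ as an extension of $\Z/2\Z$ by a cyclic group. Hence $B = \T^2$ and the sequence reduces to $0 \to \Z \to \Z^3 \to \Z^2 \to 0$.

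The second step is to show that the bundle is trivial. The parallelizability of $\Tc \cong \T^3$ and $\T^2$, together with the splitting $T\Tc = \ker d\pi \oplus \pi^* T\T^2$, forces $\ker d\pi$ to be an orientable line bundle. Consequently $\pi$ carries the structure of an oriented $S^1$-bundle over $\T^2$, classified by an Euler class $e \in H^2(\T^2; \Z) \cong \Z$. The Gysin sequence then gives $H_1(\Tc) \cong \Z^2 \oplus \Z/|e|\Z$, and torsion-freeness of $H_1(\T^3) = \Z^3$ forces $e = 0$. Hence $\pi$ is a trivial $S^1$-bundle and admits a smooth global section $s : \T^2 \to \Tc$. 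The image $N = s(\T^2)$ is a closed embedded $2$-torus whose tangent plane at each point is a complement of $\ker d\pi = \ker \omega_{|T\Tc}$, so $N$ is transverse to the characteristic bundle—contradicting the lemma.

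I expect the delicate point to be verifying orientability of $\ker d\pi$ so that the Euler-class classification applies; the parallelizability observation makes this immediate. Without it one would have to pass to the orientation double cover of $\Tc$, which, since every finite cover of $\T^3$ is again a $3$-torus, reduces to the same situation and still produces the contradiction.
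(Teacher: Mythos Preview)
Your argument is correct and follows the same skeleton as the paper's: identify the base as $\T^2$ via the homotopy long exact sequence, produce a global section of the circle bundle, and then invoke the lemma. The substantive difference is in how the section is produced. The paper orients the leaves (using that both $\T^3$ and the base are orientable, the latter because $\omega$ descends to a symplectic form on the quotient), chooses a unit vector field $X$ along them with $1$-periodic flow, lifts to $\R^3$, and shows that the averaged coordinate $u(x)=\int_0^1\langle\tilde\varphi_t(x),e_1\rangle\,dt$ has $du\cdot X\equiv\mathrm{const}\neq 0$, so $\{u=0\}$ descends to a transverse $2$-torus. You instead observe that the oriented $S^1$-bundle is classified by its Euler class and compute via the Gysin sequence that $H_1$ of the total space is $\Z^2\oplus\Z/|e|\Z$; since $H_1(\T^3)=\Z^3$ this forces $e=0$ and hence triviality. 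Your route is shorter and more conceptual; the paper's is more explicit and self-contained (no Gysin machinery). One small imprecision: torsion-freeness alone does not exclude $|e|=1$ (which would give $H_1\cong\Z^2$, still torsion-free); it is the full identification $H_1(\T^3)=\Z^3$ that forces $e=0$. Also note that the paper gets orientability of the base for free from the descended symplectic form, which immediately rules out $\mathbb{RP}^2$ without your separate extension argument.
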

 We deduce a corollary in the spirit of Herman's question:
 \begin{corollary}\label{Co1}
 Let   $\Tc\subset M$ be a $3$-dimensional $C^2$-embedded torus  of a $6$-dimensional  exact symplectic manifold $M$ such that
  there exists a $C^1$-embedded characteristic loop $j:\T \rightarrow \Tc$ in $\Tc$, i.e.  such that:
$$\forall t \in\T , \gamma'(t)\in\ker\omega_{|T_{\gamma(t)}\Tc}.$$
Then,
\begin{itemize}
\item either $\Tc$ is Lagrangian;
\item or for every symplectic diffeomorphism $f$ of $M$ such that $f(\Tc)=\Tc$ and the family $(f^k_{|\Tc})_{k\in\Z}$ is equilipschitz, the restricted dynamics $f_{|\Tc}$ is not minimal.
\end{itemize}
\end{corollary}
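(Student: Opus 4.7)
The plan is: assuming $\Tc$ is not Lagrangian, I will show that the existence of a symplectic $f$ with $f(\Tc)=\Tc$, equilipschitz family $(f^k_{|\Tc})$ and minimal $f_{|\Tc}$ forces the characteristic foliation of $\Tc$ to be a locally trivial $S^1$-fibration, contradicting Corollary \ref{Co0}. First, $\omega_{|T_x\Tc}$ has even rank, so the fibre dimension $p(x)=\dim\ker\omega_{|T_x\Tc}$ is odd and (by the lower semi-continuity and $f$-invariance argument sketched in the paragraph preceding the lemma above) constant on $\Tc$; the value $p\equiv 3$ would mean $\Tc$ is Lagrangian, which is excluded, so $p\equiv 1$. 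Thus $\ker\omega_{|T\Tc}$ is a $C^1$ line field, integrating to a $C^1$ characteristic foliation $\mathcal{F}$ of $\Tc$, which is preserved by $f$ since $f^*\omega=\omega$.

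The $C^1$-embedded loop $\gamma$ in the hypothesis is a closed leaf of $\mathcal{F}$; its iterates $\gamma_k:=f^k\circ\gamma$ are also closed leaves of $\mathcal{F}$, with lengths bounded uniformly by $K\,L(\gamma)$, where $K$ is the equilipschitz constant. By minimality of $f_{|\Tc}$, the union $\bigcup_k\gamma_k(\T)$ is dense in $\Tc$. Parameterising each $\gamma_k$ with constant speed on $\T$ and applying Arzel\`a--Ascoli, every $x\in\Tc$ is the uniform limit of some subsequence of these parameterisations; the limit is a Lipschitz closed loop through $x$ whose image is contained in the leaf of $\mathcal{F}$ through $x$, and by connectedness that leaf must be a circle of length at most $K\,L(\gamma)$. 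Hence every leaf of $\mathcal{F}$ is a circle of uniformly bounded length.

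To conclude I would invoke Epstein's theorem on $1$-foliations of compact $3$-manifolds all of whose leaves are compact of uniformly bounded length: $\mathcal{F}$ then realises $\Tc$ as a Seifert fibration $\pi:\Tc\to B$ over a compact connected $2$-orbifold $B$, with only finitely many exceptional fibres. The map $f$ descends to a minimal homeomorphism $\bar f$ of $B$, and the finite $\bar f$-invariant set $E\subset B$ of exceptional points would have dense $\bar f$-orbits if non-empty -- impossible for a finite set in the infinite space $B$, so $E=\emptyset$. Hence $\pi$ is a locally trivial $S^1$-fibration of $\Tc$, contradicting Corollary \ref{Co0}.

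The main obstacle is the Epstein step: extracting a Seifert structure from ``all leaves compact of uniformly bounded length'' at the $C^1$ regularity of $\mathcal{F}$. A regularity-agnostic alternative is to argue locally via the Poincar\'e first-return map of $\mathcal{F}$ on a small disc transverse to $\gamma$: it is a pointwise periodic homeomorphism of uniformly bounded period, to which Montgomery's theorem applies to produce a common period and therefore a local Seifert model near $\gamma$, after which the minimality argument on the leaf space rules out exceptional fibres exactly as above.
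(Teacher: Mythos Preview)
Your overall strategy coincides with the paper's: assuming $\Tc$ is non-Lagrangian, you force the characteristic line field to integrate to a locally trivial circle fibration and then invoke Corollary~\ref{Co0}. The execution of the crucial step, however, is genuinely different. The paper orients the characteristic line field by a unit $C^1$ vector field $X$, uses the equilipschitz bound to show that all points of $\Tc$ are periodic for $X$ with period in a fixed compact interval, and then proves directly (Lemma~\ref{L1}) that the minimal period $\tau$ is continuous: if $\tau$ jumped at some $x_0$, minimality of $f$ together with $f$-invariance of the discontinuity set would produce iterates of $x_0$ with $\tau$ growing geometrically, contradicting the upper bound. Continuity of $\tau$ gives a free smooth $\T$-action, hence a locally trivial bundle, and one applies Corollary~\ref{Co0}. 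This is entirely elementary and self-contained.

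Your route instead bounds leaf lengths by Arzel\`a--Ascoli, appeals to Epstein's theorem (or Montgomery's pointwise-periodic theorem) to obtain a Seifert fibration, and then rules out exceptional fibres by observing that $f$ descends to a minimal homeomorphism of the base orbifold whose finite set of exceptional points would have to be empty. This is correct, and the descent argument is an elegant way to eliminate exceptional fibres, but it outsources the hardest step to a nontrivial external theorem, whereas the paper's Lemma~\ref{L1} does the same job with a short direct argument exploiting minimality of $f$. One small point to tighten: your Arzel\`a--Ascoli step concludes ``by connectedness that leaf must be a circle'', but a $C^0$ limit of embedded loops could a priori be a back-and-forth path on an interval; this is easily fixed by passing to the flow of $X$ and using continuous dependence to see that the limit point is genuinely periodic for $X$---which is precisely how the paper argues.
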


\begin{remark}\begin{enumerate}
\item The class of submanifolds $\Tc$ that are described in corollary \ref{Co1} contains submanifolds that clearly cannot carry   a minimal restricted symplectic dynamics, as submanifolds that are non-Lagrangian everywhere but Lagrangian on some open subset. But the example given in proposition \ref{P1} is an example of such a submanifold for which the result is not so   trivial.
\item Unfortunately, we need the assumption that ``$(f^k_{|\Tc})_{k\in\Z}$ is equilipschitz'' to prove the corollary. This condition is satisfied when $f_{|\Tc}$ is Lipschitz conjugated to a  rotation (that is a priori weaker than the condition ``$C^1$ conjugated to an ergodic  rotation'' as in Herman's proposition).
\item M.~Herman proved in \cite{He2} that any diffeomorphism $f$ of $\T^3$ that is homotopic to identity and such that the family $(f^k)_{k\in\Z}$ is equi-continuous has a unique rotation number; he proved too that $f$    is $C^0$ conjugated to an ergodic rotation when this rotation number corresponds to an ergodic rotation.
\end{enumerate}
\end{remark}

 Question \ref{Q1} concerns the link between the dynamics and the Lagrangian property.

 In the remaining of this article, we will consider this kind of questions for a wide class of symplectic dynamics, the ones that correspond to   Tonelli Hamiltonians:

\begin{definition}
Let $M$ be a $d$-dimensional closed manifold and let us denote   its cotangent bundle endowed with its usual symplectic form  by  $(T^*M, \omega)$.

 A $C^2$ function $H: T^*M\rightarrow \R$ is called a {\em Tonelli Hamiltonian} if it is:
\begin{itemize}
\item superlinear in the fiber, i.e. $$\forall A\in \R, \exists B\in \R, \forall (q,p)\in T^*M, \| p\|\geq B\Rightarrow  H(q,p) \geq A\| p\|;$$
\item $C^2$-convex in the fiber i.e. for every $(q,p)\in T^*M$, the Hessian $\frac{\partial^2H}{\partial p^2}$ of $H$ in the fiber direction is positive definite as a quadratic form.
\end{itemize}
We denote the Hamiltonian flow of $H$ by $(\varphi_t)$ and the Hamiltonian vector-field by $X_H$.

\end{definition}
It is easy to build a Tonelli Hamiltonian having a non-Lagrangian invariant graph. The following example was built in my thesis and is commented in \cite{He1}:

\begin{exa}Let us consider the Tonelli  Hamiltonian $H:\T^2\times \R^2\rightarrow \R$ defined by $H(\theta_1, \theta_2, r_1, r_2)=\frac{1}{2}(r_1-\psi (\theta_2))^2+\frac{1}{2}r_2^2$ where $\psi:\T\rightarrow \R$ is a non-constant function.
\medskip

Then the torus $\{(\theta_1, \theta_2, \psi(\theta_2), 0); (\theta_1, \theta_2)\in\T^2\}$ is a non-Lagrangian torus that is invariant by the Hamiltonian flow of $H$. Moreover, the restricted dynamics is conjugated to a non-ergodic rotation of $\T^2$ (the identity map) and normally elliptic.
\end{exa}

As noticed by M.~Herman, this counter-example contains orbits that are {\em non-minimizing} where:

\begin{definition}
Let $H:T^*M\rightarrow \R$ be a Tonelli Hamiltonian and  let $L: TM\rightarrow \R$ be  the Lagrangian that is associated to  $H$. It is defined by $$\displaystyle{L(q, v)=\max_{p\in T^*_qM} (p.v-H(q,p))}.$$

The Lagrangian action $A_L(\gamma )$ of a $C^1$ arc $\gamma: [a, b]\rightarrow M$ is defined by:
$$A_L(\gamma )=\int_a^bL(\gamma (s), \dot\gamma (s))ds.$$
\medskip

An orbit $(q_t, p_t)_{t\in\R}$ is {\em minimizing} (resp. {\em locally minimizing}) if for every $a<b$ in $\R$, the arc $(q(t))_{t\in[a, b]}$ minimizes (resp. locally minimizes) the action among all the $C^1$ arcs $\gamma: [a, b]\rightarrow M$ such that $\gamma(a)=q(a)$ and $\gamma(b)=q(b)$.
\end{definition}

\begin{remark}
\begin{enumerate}
\item It is well-known that an orbit is locally minimizing if and only if it has no conjugate vectors.
\item A classical result asserts that any orbit of a Tonelli Hamiltonian flow that is contained in an invariant Lipschitz Lagrangian graph is locally minimizing.
\end{enumerate}
\end{remark}
 Then we have a new  question:

 \begin{ques}
 Is a   $d$-dimensional submanifold with no conjugate points that is invariant by a Tonelli Hamiltonian flow necessarily  Lagrangian?
 \end{ques}
We  give a negative answer for the geodesic flow of the flat metric of $\T^3$:
\begin{proposition}\label{P1}
Let $H:\T^3\times \R^3\rightarrow \R$ be the Hamiltonian defined by:
$$H(\theta, r)=\frac{1}{2}(r_1^2+r_2^2+r_3^2) $$
and let $j:\T^3\rightarrow \T^3\times\R^3$ be the embedding map defined by:
$$j(\theta)=(\theta; \cos 2\pi\theta_3,\sin 2\pi\theta_3, 0).$$
Then the submanifold $\Tc=j(\T^3)$ is invariant by the Hamiltonian flow of $H$, non-Lagrangian and with no conjugate points.

We can modify this example in such a way that the torus has no conjugate points and is non-Lagrangian and not a graph, isotopic or non isotopic to $\T^3\times \{ 0\}$.

Moreover, for every symplectic diffeomorphism $f:T^* \T^3 \rightarrow T^*\T^3$ satisfying $f(\Tc)=\Tc$, then $f_{|\Tc} $ is not minimal (i.e. has at least one non-dense orbit).

\end{proposition}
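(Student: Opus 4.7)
The plan is to verify the four claims (invariance, non-Lagrangianness, absence of conjugate points, non-minimality) mostly by direct computation, with the last claim requiring a rigidity argument on the characteristic foliation.

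The Hamiltonian flow is the flat geodesic flow $\varphi_t(\theta, r) = (\theta + tr, r)$. Since the third component of $r$ vanishes along $\Tc$, the image of $j(\theta)$ under $\varphi_t$ is $j(\theta + t(\cos 2\pi\theta_3, \sin 2\pi\theta_3, 0))$, giving invariance. Pulling back $\omega = \sum_i dr_i\wedge d\theta_i$ via $j$ yields
$$j^*\omega = 2\pi\bigl(\sin 2\pi\theta_3\, d\theta_1 - \cos 2\pi\theta_3\, d\theta_2\bigr)\wedge d\theta_3,$$
which is nowhere zero, so $\Tc$ is not Lagrangian. The flat Jacobi equation $\ddot J = 0$ has affine solutions, so no nonzero Jacobi field vanishes twice, giving absence of conjugate points.

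For non-minimality, the expression above shows that the characteristic kernel is spanned by $\cos 2\pi\theta_3\,\partial_{\theta_1} + \sin 2\pi\theta_3\,\partial_{\theta_2}$. The characteristic foliation is tangent to the two-dimensional slices $\Sigma_c := j(\{\theta_3 = c\}) \cong \T^2$, with leaves being parallel lines of direction $(\cos 2\pi c, \sin 2\pi c)$. These leaves are closed precisely when $\tan 2\pi c \in \Q\cup\{\infty\}$; let $R\subset S^1$ denote this countable dense set. Any symplectic $f$ with $f(\Tc) = \Tc$ satisfies $(f|_\Tc)^*(j^*\omega) = j^*\omega$, so $f|_\Tc$ preserves the characteristic foliation and maps closed leaves to closed leaves. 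Hence for $c \in R$, the image $f(\Sigma_c)$ is a connected compact $2$-submanifold contained in $\bigsqcup_{c'\in R}\Sigma_{c'}$; the $\theta_3$-projection of $f(\Sigma_c)$ is a continuous map from a connected space to the totally disconnected set $R$, hence constant, and by dimension $f(\Sigma_c) = \Sigma_{c'}$ for a unique $c'\in R$.

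A closed leaf in $\Sigma_c$ of coprime slope $q/p$ has homology class $\pm(p, q, 0)$ in $H_1(\Tc;\Z) \cong \Z^3$, so $f_*$ preserves the sublattice $\Z^2\times\{0\}$ and has block form
$$f_* = \begin{pmatrix} A & v \\ 0 & \pm 1\end{pmatrix}, \qquad A \in GL(2,\Z),$$
with induced action on rational slopes being the M\"obius action of $A$ on $\Q P^1 \subset \R P^1$. No element of $GL(2,\Z)$ has dense orbits on $\R P^1$: elliptic elements are of finite order, parabolic elements contract every orbit to a unique fixed point, and hyperbolic elements have north--south dynamics toward two irrational fixed points. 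Hence $\bigcup_{n\in\Z}\Sigma_{A^n\cdot c}$ projects to a non-dense subset of $S^1$, and any $f$-orbit of a point of $\Sigma_c$ is trapped in this union and is not dense in $\Tc$; so $f|_\Tc$ is not minimal. The main obstacle is the rigidity step $f(\Sigma_c) = \Sigma_{c'}$, cleanly handled by total disconnectedness of $R$; the rest reduces to the elementary classification of $GL(2,\Z)$-dynamics on $\R P^1$. The modifications producing non-graph invariant tori (isotopic or non-isotopic to the zero section) are obtained by composing $j$ with an appropriate symplectic diffeomorphism of $T^*\T^3$, or by altering the winding of the fiber component, in both cases preserving the invariance, non-Lagrangian, and no-conjugate-points properties.
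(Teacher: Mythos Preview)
Your argument for the main claims is correct and runs parallel to the paper's, but with two genuinely different technical choices in the non-minimality proof. For the rigidity step $f(\Sigma_c)=\Sigma_{c'}$, the paper first shows this for \emph{irrational} $c$ by observing that an irrational slice is the closure of any one of its characteristic leaves (so its image under $f$ is again such a closure, hence an irrational slice), and then passes to rational slices by continuity, obtaining a homeomorphism $h:\T\to\T$ with $f(\Sigma_c)=\Sigma_{h(c)}$ for \emph{all} $c$. Your route via the total disconnectedness of $R$ is more economical: you only ever need the statement for rational $c$, and you get it in one line. For the endgame, the paper extracts $A$ from the isotopy $(f_{\theta_3})_{\theta_3\in\T}$ acting on $H^1(\T^2)$ and argues by the bounded/unbounded dichotomy on the sequence $A^n\binom{1}{0}$; you extract $A\in GL(2,\Z)$ directly from $f_*$ on $H_1(\Tc;\Z)$ and invoke the elliptic/parabolic/hyperbolic trichotomy on $\R P^1$. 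These are equivalent packagings of the same obstruction; your use of $GL(2,\Z)$ rather than $SL(2,\Z)$ is in fact the more careful statement, since nothing forces $f_{|\Tc}$ to be orientation-preserving. One notational wrinkle: the map $c\mapsto[\cos 2\pi c:\sin 2\pi c]$ is a 2-to-1 cover $S^1\to\R P^1$, so ``$\Sigma_{A^n\cdot c}$'' is ambiguous; but since the preimage of a non-dense set under an open map is non-dense, the conclusion survives.

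Your description of the modifications, however, does not work as stated. Composing $j$ with a generic symplectic diffeomorphism of $T^*\T^3$ will destroy invariance under the \emph{original} flow $(\varphi_t)$, and altering the winding of the \emph{fiber} component (e.g.\ replacing $(\cos 2\pi\theta_3,\sin 2\pi\theta_3,0)$ by $(\cos 2\pi k\theta_3,\sin 2\pi k\theta_3,0)$) still produces a graph over $\T^3$. The paper instead modifies the \emph{base}: it sets $j(\theta)=(\theta_1,\theta_2,\eta(\theta_3);\cos 2\pi\theta_3,\sin 2\pi\theta_3,0)$ for a smooth map $\eta:\T\to\T$ that is not a homeomorphism. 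Invariance persists because the flow only translates $(\theta_1,\theta_2)$; the computation of $j^*\omega$ is unchanged; and the projection to $\T^3$ is $(\theta_1,\theta_2,\eta(\theta_3))$, which fails to be injective. Taking $\eta$ of degree $1$ gives a non-graph isotopic to the zero section, and degree $0$ gives one that is not.
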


  \begin{remark} \begin{enumerate}
  \item The counter-example $\Tc$ is foliated by invariant isotropic $2$-dimen\-sional tori on which the Hamiltonian  flow of $H$ is conjugated to a rotation flow.
  \item The characteristic subbundle of $T_x\Tc$ is 1-dimensional and have some compact 1-dimensional leaves, hence we are in  a case where we can apply corollary \ref{Co1}, but we improve the result contained in corollary \ref{Co1} for this example, because we don't assume that the family $(f^n)$ is equilipschitz.

  \end{enumerate}
  \end{remark}

  Let us now give some dynamical conditions that imply that an invariant manifold with no conjugate points  is Lagrangian.
\begin{theorem}\label{T1}  Let $\Lc$ be a  Lipschitz  invariant $d$-dimensional submanifold of a Tonelli Hamiltonian flow   $(\varphi_t)$ of $T^*M$ such that: \begin{itemize}
\item  there exist two sequences $(t_n)$ and $(s_n)$  tending to $+\infty$ such that the families $(\varphi_{t_n|\Lc})_{n\in\N}$ and $(\varphi_{-s_n|\Lc})_{n\in\N}$ are equilipschitz;
\item all the orbits that are contained in $\Lc$ have no conjugate points.
\end{itemize}
Then $\Lc$ is Lagrangian and is the graph of a $C^1$ function.
\end{theorem}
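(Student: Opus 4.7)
The plan is to identify $T_x\Lc$ with the Green subbundles associated to the no-conjugate-points orbits of the flow. Because each orbit in $\Lc$ has no conjugate points, at every $x\in\Lc$ one has two invariant Lagrangian $d$-dimensional subbundles
$$G^-(x)=\lim_{t\to+\infty}D\varphi_{-t}\bigl(V(\varphi_t(x))\bigr),\qquad G^+(x)=\lim_{t\to+\infty}D\varphi_{t}\bigl(V(\varphi_{-t}(x))\bigr),$$
where $V(y)=\ker D\pi(y)$ is the vertical, each transverse to $V$. I would use the standard properties of these bundles: $G^-$ is lower semi-continuous, $G^+$ is upper semi-continuous, $G^-\leq G^+$ in the natural order on Lagrangian subspaces transverse to the vertical, and the crucial growth dichotomy --- if $v\in T_x(T^*M)$ has $(\|D\varphi_{t_n}v\|)_n$ bounded along some $t_n\to+\infty$, then $v\in G^-(x)$, and symmetrically for $G^+$ in backward time.

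First I would show that at almost every $x\in\Lc$ (Rademacher), the tangent space $T_x\Lc$ coincides with both Green bundles. The forward equilipschitz of $(\varphi_{t_n|\Lc})$ gives $\|D\varphi_{t_n}v\|\leq K\|v\|$ for every $v\in T_x\Lc$, so by the growth dichotomy $T_x\Lc\subseteq G^-(x)$; the backward hypothesis yields analogously $T_x\Lc\subseteq G^+(x)$; dimensions agreeing,
$$T_x\Lc=G^-(x)=G^+(x)\quad\text{at a.e. }x\in\Lc.$$
Since $G^-\leq G^+$ on $\Lc$ with equality on a dense set, the stated semi-continuities force $G^-=G^+$ everywhere on $\Lc$, and the common subbundle is then continuous.

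From here the regularity follows formally: a Lipschitz submanifold whose a.e.-defined tangent bundle coincides with a given continuous $d$-dimensional subbundle is $C^1$ with exactly that tangent bundle (in any local chart presenting $\Lc$ as a Lipschitz graph, the Rademacher derivative of the defining function agrees with the continuous subbundle almost everywhere, hence everywhere by density). Thus $\Lc$ is $C^1$, $T\Lc=G^-$ is Lagrangian, and $T\Lc$ is transverse to the vertical at every point, so $\pi|_\Lc$ is a local diffeomorphism; a short global argument (using that $\Lc$ has the dimension of $M$ and is closed without boundary) upgrades this to $\Lc$ being a $C^1$ graph over $M$.

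The main obstacle is the growth dichotomy invoked in the second paragraph: showing that a tangent vector outside $G^-(x)$ must blow up in norm forward along the flow. This is not a formal consequence of the definition of $G^\pm$; it rests on Jacobi-field estimates that exploit the fiberwise strict convexity of the Tonelli Hamiltonian and form the analytic heart of the theory. Once that ingredient is granted, the remainder of the proof --- dimension counts, semi-continuity, and the Rademacher-plus-continuity upgrade --- is essentially book-keeping.
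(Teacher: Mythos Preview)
Your strategy---Green bundles, the growth dichotomy, then transversality to the vertical and a local-to-global step---is the paper's strategy. Two points are worth flagging.

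First, the paper bypasses your Rademacher-plus-semi-continuity detour by working with the Bouligand contingent cone $C_x\Lc$ at \emph{every} $x\in\Lc$. The equilipschitz hypothesis controls $\|D\varphi_{t_n}v\|$ for every $v\in C_x\Lc$, not just for tangent vectors at differentiability points (pass the Lipschitz inequality $\|\varphi_{t_n}(x_k)-\varphi_{t_n}(x)\|\le K\|x_k-x\|$ to the limit defining $v$). Hence $C_x\Lc\subset G_-(x)\cap G_+(x)$ directly; since the contingent cone of a $d$-dimensional Lipschitz submanifold surjects onto some $d$-plane, a dimension count forces $C_x\Lc=G_-(x)=G_+(x)$ at every point, giving differentiability, the Lagrangian property, and $G_-=G_+$ (hence continuity) in one stroke. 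This is cleaner than your route and sidesteps the semi-continuity bookkeeping; note also that your density argument only works if the semi-continuities run the right way relative to the order $G_-\le G_+$, and you should double-check your stated directions.

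Second, your last sentence hides a genuine step. Closed, boundaryless, $d$-dimensional, and $\pi_{|\Lc}$ a local diffeomorphism only yield that $\pi_{|\Lc}:\Lc\to M$ is a finite covering; nothing in that list forces a single sheet. The paper devotes a separate proposition to this: one translates $\Lc$ by a suitable closed $1$-form to make it exact Lagrangian, takes a primitive $S$ of the Liouville form on $\Lc$, and runs a max--min argument on the fiberwise extrema of $S$ to show two sheets would be forced to coincide. The Lagrangian property---which you have already established---is essential input here, not a topological afterthought.
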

 \begin{corollary}\label{C1}
  Let $\Tc$ be a  Lipschitz  invariant $d$-dimensional torus of Tonelli Hamiltonian flow   of $T^*\T^d$ such that: \begin{itemize}
\item  the time $T$ flow restricted to $\Tc$ is Lipschitz conjugated to a (not necessarily ergodic) rotation for one $T>0$ ;
\item all the orbits that are contained in $\Tc$ have no conjugate points.
\end{itemize}
Then $\Tc$ is Lagrangian and is the graph of a $C^1$ function.
 \end{corollary}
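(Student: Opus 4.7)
The corollary is a direct consequence of Theorem \ref{T1}: the plan is simply to verify the two hypotheses of that theorem. The no-conjugate-points assumption is identical in both statements, so the only work is to produce the two equilipschitz sequences of iterates from the Lipschitz conjugacy hypothesis.

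So suppose $h\colon \T^d\to \Tc$ is a bi-Lipschitz homeomorphism such that $\varphi_T|_{\Tc}\circ h=h\circ R_\alpha$ for some rotation $R_\alpha$ of $\T^d$, where $T>0$. Iterating, we get
$$
\varphi_{nT}|_{\Tc}=h\circ R_\alpha^{\,n}\circ h^{-1}=h\circ R_{n\alpha}\circ h^{-1}
$$
for every $n\in\Z$. Since every rotation of $\T^d$ is an isometry, each $R_{n\alpha}$ has Lipschitz constant exactly $1$ for the flat metric on $\T^d$. Consequently, with respect to any fixed metric on $\Tc$ (comparable via $h$ to the flat metric on $\T^d$), the Lipschitz constant of $\varphi_{nT}|_{\Tc}$ is bounded by $\mathrm{Lip}(h)\cdot\mathrm{Lip}(h^{-1})$, a constant independent of $n$. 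The same argument applies to negative iterates. Crucially, this estimate does not require the rotation to be ergodic; only the fact that a rotation is an isometry is used.

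Setting $t_n=s_n=nT$ then gives two sequences tending to $+\infty$ such that $(\varphi_{t_n|\Tc})_{n\in\N}$ and $(\varphi_{-s_n|\Tc})_{n\in\N}$ are equilipschitz, which is exactly the first hypothesis of Theorem \ref{T1}. Together with the no-conjugate-points assumption, Theorem \ref{T1} applies to $\Lc=\Tc$ and yields that $\Tc$ is Lagrangian and is the graph of a $C^1$ function.

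There is essentially no obstacle here beyond assembling the two hypotheses; all the real work is hidden in Theorem \ref{T1}. The only point worth flagging is that weakening \textquotedblleft $C^1$-conjugated to an ergodic rotation\textquotedblright\ (as in Herman's proposition) to \textquotedblleft Lipschitz conjugated to a not necessarily ergodic rotation\textquotedblright\ is harmless for the equilipschitz step, since the argument above uses only the isometry property of rotations and the bi-Lipschitz nature of $h$.
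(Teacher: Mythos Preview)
Your proof is correct and follows exactly the approach the paper takes: the paper's own justification of the corollary is the single sentence ``This happens for example when $\varphi_T$ is Lipschitz conjugated to some rotation of the torus or the sphere for some $T\not=0$,'' and you have simply spelled out the (standard) computation behind that sentence. There is nothing to add.
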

\begin{remark}\begin{enumerate}
\item There exist examples of Tonelli flows that satisfy the hypothesis of theorem \ref{T1} and for which the restricted dynamics is not minimal. Hence the question that we answer is different from Herman's question.
 \item Let us notice that we obtain that $\Lc$ is more than just Lagrangian: it is a graph. Hence we prove that with some assumption on the dynamics, the invariant manifold is a graph. This kind of result is what is generally called a ``multidimensional Birkhoff theorem''. 

The only previous known results concerned Lagrangian submanifolds:  first results  are due to M.~Herman (see \cite{He1}) and M.~Bialy and L.~Polterovich (see \cite{Bia1, BiaPol1, BiaPol2, BiaPol3}) by assuming that the restricted dynamics is chain recurrent. Without dynamical assumption but assuming again  that    the  invariant manifold is Lagrangian, I improved the result in \cite{Ar2}. Then P.~Bernard and J.~dos Santos proved a similar result for invariant Lipschitz Lagrangian submanifolds (see \cite{BeSa});
\item We prove too a result of regularity in the previous statements: assuming that the invariant manifold is Lipschitz, we conclude that it is $C^1$. Similar results were proved in \cite{Ar1} for Lipschitz invariant Lagrangian graphs.
\end{enumerate}
 \end{remark}
 If we know that the invariant submanifold is $C^1$, we can improve the statements.

 \begin{theorem}\label{T2}
 Let $M$ be a closed $n$-dimensional manifold and let $N\subset T^*M$ be a closed $n$-dimensional $C^1$ submanifold with no conjugate points  that is invariant by some Tonelli flow $(\varphi_t)$. We assume that for a dense subset $D\subset N$ : 
 $$\forall x\in D, \forall v\in T_xN, \min\{ \liminf_{t\rightarrow +\infty}\| D\varphi_tv\|, \liminf_{t\rightarrow -\infty} \| D\varphi_tv\|\}<+\infty.$$

  Then $N$ is a   Lagrangian submanifold of $T^*M$ that is a graph.
 \end{theorem}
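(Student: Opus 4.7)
The strategy is to identify $T_xN$ with one of the two Green bundles canonically associated with a Tonelli flow without conjugate points. Recall that, along each orbit free of conjugate vectors, one defines two Lagrangian subbundles $G_+$ and $G_-$ of $T(T^*M)$, both transverse to the vertical distribution, obtained as limits of vertical subspaces transported by $D\varphi_{\mp t}$ as $t \to +\infty$. The key classical fact (proved using the positive definiteness of $\partial^2 H/\partial p^2$ and a convexity argument on the Lagrangian action) is that any $v \in T_x(T^*M)$ with $\liminf_{t\to+\infty}\|D\varphi_t v\| < +\infty$ must lie in $G_-(x)$, and symmetrically any $v$ with $\liminf_{t\to-\infty}\|D\varphi_t v\| < +\infty$ lies in $G_+(x)$.

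Fix $x \in D$. Since $N$ is $C^1$ and invariant, $T_xN$ is $D\varphi_t$-invariant, and the hypothesis of the theorem tells me that every $v \in T_xN$ lies in $G_+(x) \cup G_-(x)$. I invoke the elementary algebraic fact that a real vector space cannot be written as the union of two proper subspaces: if vectors $a, b \in T_xN$ existed with $a \in G_-(x) \setminus G_+(x)$ and $b \in G_+(x) \setminus G_-(x)$, then $a+b \in T_xN$ would have to sit in one of the two bundles, forcing either $a \in G_+(x)$ or $b \in G_-(x)$, a contradiction. Hence $T_xN \subset G_+(x)$ or $T_xN \subset G_-(x)$, and the dimension count promotes this to equality. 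In either case, $T_xN$ is Lagrangian.

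By density of $D$ in $N$ and continuity of $\omega|_{TN}$ (which makes sense because $N$ is $C^1$), the form $\omega$ vanishes on $TN$ everywhere, so $N$ is Lagrangian throughout. For the graph property, the same equality $T_xN \in \{G_+(x),G_-(x)\}$ on $D$, combined with continuity of $T_xN$, ensures that $T_xN$ is transverse to the vertical distribution at every point of $N$. Hence $\pi|_N : N \to M$ is a local diffeomorphism; since $N$ is closed and $M$ is compact, it is a covering. Here I would invoke the multidimensional Birkhoff-type rigidity used in the earlier-cited works (Herman, Bialy--Polterovich, Arnaud, Bernard--dos Santos) to conclude that this covering is of degree one, i.e.\ that $N$ is a genuine graph.

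The principal technical obstacle is the sharp $\liminf$-version of the Green-bundle characterization (as opposed to $\lim$ or $\limsup$): one needs to show that any component of $v$ transverse to $G_-$ produces unbounded growth along \emph{every} subsequence going to $+\infty$, not merely some. This is where the full force of the Tonelli hypothesis, through the positivity of the symplectic area of the associated Jacobi-type curves in the Lagrangian Grassmannian, must enter. Once this ingredient is in place, the rest of the argument is essentially linear algebra, continuity and an appeal to the multidimensional Birkhoff machinery.
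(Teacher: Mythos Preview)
Your proof is correct and follows the same line as the paper's: the Green-bundle dynamical criterion forces $T_xN\subset G_-(x)\cup G_+(x)$ on $D$, the vector-space-not-a-union-of-two-proper-subspaces argument gives $T_xN\in\{G_-(x),G_+(x)\}$, and density plus continuity of $x\mapsto T_xN$ (together with the \emph{uniform} transversality of $G_\pm$ to the vertical over the compact $N$, which is the one point you should make explicit) yields that $N$ is Lagrangian and $\pi_{|N}$ is a covering, which the paper then shows is a diffeomorphism via an Arnol'd-type proposition. The $\liminf$ version of the criterion you flag as the main technical input is precisely what the paper imports from \cite{Ar1}.
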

 \begin{remark}
Two kinds of general flows of $N$ satisfy these hypotheses: the gradient flows and the flows coming from the action of a compact Lie group. We give two examples of corollaries.
\end{remark}
\begin{corollary}\label{C4}
   Let $M$ be a closed $n$-dimensional manifold and let $N\subset T^*M$ be a closed $n$-dimensional $C^1$ submanifold with no conjugate points  that is invariant by some Tonelli flow $(\varphi_t)$. We assume that there exists a compact $C^1$ Lie group $G$ that acts on $N$ and that contains the diffeomorphisms  $ \varphi_{t|N} $ for $t\in\R$.  Then $N$ is a   Lagrangian submanifold of $T^*M$ that is a graph.
     \end{corollary}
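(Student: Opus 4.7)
The plan is to derive Corollary \ref{C4} as an immediate consequence of Theorem \ref{T2}, by exploiting the compactness of $G$ to verify the $\liminf$ hypothesis of that theorem.

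The first step is to obtain a uniform bound on the derivatives of the elements of $G$. Fix any auxiliary Riemannian metric on $N$; since $N$ is closed (compact without boundary) and $C^1$-embedded in $T^*M$, this metric is equivalent, up to multiplicative constants, to the restriction of any fixed metric on $T(T^*M)$. The $C^1$ action $G\times N\to N$ induces, by evaluation and differentiation, a continuous map from $G$ into the space of $C^0$ bundle isomorphisms $TN\to TN$ covering diffeomorphisms of $N$. Since $G$ is compact, the image of this map is compact in the $C^0$ topology, and in particular equibounded: there exists $C>0$ such that $\|(Dg)_x v\|\le C\|v\|$ for every $g\in G$, every $x\in N$ and every $v\in T_xN$.

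The second step is to apply this bound to the subfamily $\{\varphi_{t|N}\}_{t\in\R}\subset G$. For every $x\in N$ and every $v\in T_xN$ one obtains
$$\|D\varphi_{t|N}(x)\,v\|\le C\|v\|\quad\text{for all }t\in\R,$$
so that $\liminf_{t\to+\infty}\|D\varphi_t v\|\le C\|v\|<+\infty$. Hence the hypothesis of Theorem \ref{T2} is satisfied with $D=N$ itself, and Theorem \ref{T2} yields that $N$ is Lagrangian and a graph over $M$.

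There is no substantive obstacle in this argument: the entire dynamical content has been packaged into Theorem \ref{T2}, and the compact Lie group hypothesis functions simply as a convenient sufficient condition guaranteeing the equiboundedness of the differentials along the flow in both time directions. The only point of mild caution is the interchange between the intrinsic norm on $TN$ and the ambient norm on $T(T^*M)|_N$, which is automatic because $N$ is compact.
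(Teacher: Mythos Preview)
Your proposal is correct and follows essentially the same approach as the paper: the paper's proof simply observes that compactness of $G$ makes $\{D\varphi_t(x)_{|N}:t\in\R\}$ compact (hence bounded) for every $x\in N$, and then invokes Theorem~\ref{T2}. Your version merely spells out this compactness argument in more detail and explicitly addresses the equivalence of intrinsic and ambient norms.
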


       If we just ask that the manifold is Lipschitz, then the statement is false as explained by the following example.

     \begin{exa} We consider the following Tonelli Hamiltonian defined on $T^*\T^3=\T^3\times \R^3$.
     $$H(\theta_1, \theta_2, \theta_3; r)=\frac{1}{2}\| r\|^2+\cos(4\pi\theta_3).$$
     We check easily that the set:
     $$N=\T\times \T\times \{ (\theta_3, \pm\sqrt{2}\sqrt{1-\cos(4\pi\theta_3)}); \theta_3\in [0, \frac{1}{2}]\}\times \{0\}\times\{ 0\}$$
     is invariant by the Hamiltonian flow, with no conjugate  points, Lipschitz Lagrangian, such that every
    point $x$ of $N$:
      $$  \forall v\in T_xN, \min\{ \liminf_{t\rightarrow +\infty}\| D\varphi_tv\|, \liminf_{t\rightarrow -\infty} \| D\varphi_tv\|\}<+\infty,$$
      but $N$ is not a graph\dots Let us notice that $N$ is not homotopic to the zero section.
     \end{exa}

      \begin{corollary}\label{C3}
   Let $M$ be a closed $n$-dimensional manifold and let $N\subset T^*M$ be a closed $n$-dimensional $C^1$ submanifold with no conjugate points  that is invariant by some Tonelli flow $(\varphi_t)$. We assume that $(\varphi_{t|N})$ is such that:
   \begin{enumerate}
   \item[$\bullet$] the   non-wandering set for $(\varphi_{t|N})$ is a finite union of periodic orbits or critical points;
   \item[$\bullet$] all the periodic orbits of $(\varphi_{t|N})$ are non-degenerate in the following sense: if $\tau$ is the period of such a period point,  the multiplicity of $1$  as an eigenvalue   of $D\varphi_{\tau|TN}$ is   one and $-1$ is not an eigenvalue; for the critical points, we assume the Hamiltonian is Morse at the critical points contained in $N$.
   \end{enumerate}
 Then $N$ is a   Lagrangian submanifold of $T^*M$ that is a graph.
     \end{corollary}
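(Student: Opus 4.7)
The plan is to deduce Corollary~\ref{C3} from Theorem~\ref{T2} by producing an open dense subset $D\subset N$ on which the min-liminf hypothesis of Theorem~\ref{T2} holds. The finiteness of the non-wandering set, together with the non-degeneracy of its components, supplies the Morse--Smale-type skeleton that makes this possible.

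First I would classify the non-wandering components $\Omega_1,\dots,\Omega_r$ via their linearized dynamics. The non-degeneracy hypothesis for a periodic orbit $\Omega_i$ (eigenvalue $1$ of $D\varphi_{\tau|TN}$ with multiplicity one and $-1$ not an eigenvalue) yields a Floquet splitting of $TN$ along $\Omega_i$ into the flow direction plus a decomposition $E^s\oplus E^c\oplus E^u$, with $E^s$, $E^u$ collecting multipliers of modulus strictly less, respectively greater, than one and $E^c$ the semisimple unit-circle multipliers $\lambda\neq\pm 1$. The Morse hypothesis at a critical point gives the analogous splitting from the linearization of $X_H$. I then call $\Omega_i$ an \emph{attractor} if $E^u(\Omega_i)=0$, a \emph{repeller} if $E^s(\Omega_i)=0$, and a \emph{saddle} otherwise.

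Next I would set
\[D\ =\ \bigcup_{A\text{ attractor}}W^{cs}(A)\ \cup\ \bigcup_{R\text{ repeller}}W^{cu}(R),\]
where $W^{cs}(A)=\{x\in N:\omega(x)\subseteq A\}$ is the basin of the attractor $A$ (open in $N$) and $W^{cu}(R)$ is its backward analogue. The complement of $D$ consists of points $x$ with $\omega(x)$ in a saddle or repeller and $\alpha(x)$ in a saddle or attractor. For a saddle $\Omega$ the set $\{x:\omega(x)\subseteq\Omega\}$ is contained in a center-stable manifold of codimension $\dim E^u(\Omega)\geq 1$, and if $\omega(x)$ is a repeller then $x\in\Omega$ itself. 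Summing over the finitely many $\Omega_i$, $N\setminus D$ lies in a finite union of positive-codimension submanifolds; this forces $D$ to be open and dense, and in passing forces at least one attractor and one repeller to exist, since the compact manifold $N$ cannot be a finite union of positive-codimension submanifolds.

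For $x\in W^{cs}(A)$ and any $v\in T_xN$, the forward orbit $\varphi_t(x)$ converges to $A$, and in a tubular neighborhood of $A$ the differential $D\varphi_t$ is controlled by the Floquet linearization of $A$; since $E^u(A)=0$, no direction is expanded and $\|D\varphi_t v\|$ stays bounded as $t\to+\infty$, whence $\liminf_{t\to+\infty}\|D\varphi_t v\|<+\infty$. The dual statement on $W^{cu}(R)$ handles backward time, so the min-liminf hypothesis of Theorem~\ref{T2} is satisfied on the dense set $D$, and Theorem~\ref{T2} yields that $N$ is a Lagrangian graph. The main obstacle is the control of $D\varphi_t$ by the Floquet linearization when $E^c$ is nontrivial: one must rule out polynomial growth of vectors in the center subbundle, which requires the semisimplicity of the unit-circle multipliers that is implicit in the non-degeneracy assumption and is usually exploited by choosing an adapted metric near each $\Omega_i$.
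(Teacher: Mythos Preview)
Your overall strategy---show that a dense set of points have bounded forward or backward linearized orbit and then invoke Theorem~\ref{T2}---is the right one, and indeed it is the paper's strategy. The genuine gap is in how you handle the unit-circle multipliers. You allow a nontrivial center bundle $E^c$ and then assert that ``semisimplicity of the unit-circle multipliers $\lambda\neq\pm 1$ is implicit in the non-degeneracy assumption''. It is not: the hypothesis says only that $1$ is simple and $-1$ is absent; it places no constraint whatsoever on other eigenvalues on the unit circle, so you cannot extract semisimplicity from it. Without that, your boundedness claim for $\|D\varphi_t v\|$ near an ``attractor'' with $E^c\neq 0$ fails (Jordan blocks give polynomial growth), and even your identification of ``$E^u=0$'' with ``topological attractor with open basin'' becomes dubious once a genuine center is present.

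The missing idea is to use the other hypothesis---\emph{no conjugate points}---to kill $E^c$ outright. Along any periodic orbit in $N$, a pair of complex-conjugate multipliers $e^{\pm i\theta}$ (with $\theta\neq 0,\pi$) would produce a $D\varphi_\tau$-invariant $2$-plane $\Pc\subset T_x(T^*M)$ on which $D\varphi_\tau$ acts as a rotation; the dynamical criterion from subsection~\ref{ss31} then forces $\Pc\subset G_-(x)$, but $\Pc$ is symplectic (the rotation preserves a nondegenerate $2$-form), contradicting that $G_-(x)$ is Lagrangian. The same Green-bundle argument rules out elliptic blocks at critical points. Hence every periodic orbit and critical point in $N$ is genuinely hyperbolic in $TN$, $E^c=0$, and your Morse--Smale decomposition becomes honest: non-attractive orbits have stable manifolds of dimension $<n$, the complement of the union of attractor basins (and, symmetrically, repeller basins) has empty interior, and near a hyperbolic attractor the contraction gives $\|D\varphi_t v\|\to 0$ directly. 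This is exactly how the paper proceeds.
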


 \subsection{Structure of the article}
Section 2 contain the proof of the results that are in the spirit of Herman's question: proposition \ref{P1} and corollaries \ref{Co0} and  \ref{Co1}.

In section 3, we prove that with some hypotheses on the restricted dynamics, some Lipschitz manifolds that are invariant by a Tonelli Hamiltonian flow are in fact $C^1$, Lagrangian and graphs.

In section 4, we prove similar results with a less restrictive dynamical hypothesis but by assuming that the invariant manifold is $C^1$ .

\section{Proof of  proposition \ref{P1} and corollaries \ref{Co0} and  \ref{Co1}}
\subsection{Proof of corollary \ref{Co0}}
 Let $\Tc$ be an embedded $3$-dimensional torus  of a 6-dimensional exact symplectic manifold $(M,\omega=d\alpha)$. Assume that
  $\ker\omega_{|T\Lc}$ defines a $1$-dimensional bundle along $\Tc$ and the corresponding  characteristic leaves
 define a   locally trivial fibration in circles of $\Tc$.  We denote by $\Sc$ the quotient manifold. Then $\Sc$ is a closed surface. This surface is endowed with the quotient symplectic form $\Omega$, then it is orientable.

 Then we have a Serre fibration and the exact homotopy sequence is:
 $$
\ldots \rightarrow \pi_2(\T )\rightarrow \pi_2(\T^3)\rightarrow  \pi_2(\Sc)\rightarrow \pi_1(\T)\rightarrow \pi_1(\T^3)\rightarrow \pi_1(\Sc)\rightarrow \pi_0(\T)=\{0\}.
$$
Hence $\pi_1(\Sc)$ is Abelian: $\Sc$ is the sphere or the torus. But $\Sc$ cannot be the sphere because there is no surjection from $\Z=\pi_1(\T)$ to $\Z^3=\pi_1(\T^3)$. Moreover, the arrow $\pi_1(\T)\rightarrow \pi_1(\T^3)$ corresponds to an inclusion map and we have the exact sequence $\pi_1(\T)\rightarrow\pi_1(\T^3)\rightarrow \pi_1(\Sc)=\Z^2$; because there is no injective morphism from $\Z^3=\pi_1(\T^3)$ to $\pi_1(\Sc)=\Z^2$,  the inclusion map cannot be zero and then the fiber of the bundle is not homotopic to a point.

We can then build a section of this bundle. To do that, we use a vectofield $X$ of $\T^3$  whose flow $(\varphi_t)$ is $1$-periodic and whose orbits are the leaves of the previous bundle. Such a vector field exists because the orientation of $\T^3$ and of $\Sc$ give an orientation of the leaves.  We lift $(\varphi_t)$ into the flow $(\tilde\varphi_t)$ of $\R^3$. As the leaves of the bundle are not homotopic to a point, there exists $v\in \Z^3\backslash\{ 0\}$ such that $\tilde\varphi_1-{\rm Id}=v$. Using an isomorphism of $\Z^3$, we can assume that $v=\lambda e_1$ where $e_1$ is the first vector of the canonical base.
We define a function $u:\R^3\rightarrow \R$ by: $u(x)=\int_0^1<\tilde \varphi_t(x), e_1>dt$. Then:
\begin{equation*}\begin{split}du(x).X(x)&=\frac{d}{dt}\int_0^1<\tilde \varphi_{s+t}(x), e_1>ds\\
 =&\frac{d}{dt}\int_t^{1+t}<\tilde \varphi_s(x), e_1>ds=<\tilde\varphi_1(x)-x,e_1>=|\lambda |\end{split}\end{equation*}
We deduce that $\Hc=\{ u=0\}$ is a   surface that meets every orbit of $(\tilde \varphi_t)$ at exactly one point. Moreover, if $h=(h_1, h_2, h_3)\in \Z^3$, then: $u(x+h)=u(x)+h_1$. Then $\Hc$ is a retract of $\R^3$ and is then connected. Moreover, if $u([0, 1]^3)=[m, M]$, then for every $x\in \R^3$ such that $u(x)=0$, we can write (here $[x]$ denote the vector whose components are the integer parts of the components of $x$)
$0=u(x)=u(x-[x])+[x_1]$ and deduce $[x_1]\in [-M, -m]$. This fact and the fact that $\Hc$ is invariant by the translations with vector in $\{ 0\}\times \Z^2$ implies that the projection of $\Hc$ on $\T^3$ is a compact surface. It is then a section of the bundle.

 \subsection{Proof of corollary \ref{Co1}}

We assume   that $\Tc\subset M$ is a $3$-dimensional $C^2$ submanifold such that:
\begin{itemize}
\item $\Tc$ is not Lagrangian;
\item $\Tc$ is a  $C^2$-embedded $3$-dimensional torus;
\item $\Tc$ is invariant by a $C^1$ symplectic diffeomorphism $f$;
\item   there exists a $C^1$-embedded characteristic loop $j:\T \rightarrow \Tc$ in $\Tc$, i.e.  such that:
$$\forall t \in\T , \gamma'(t)\in\ker\omega_{|T_{\gamma(t)}\Tc}.$$
\end{itemize}
Let us assume that $f:M\rightarrow M$ is a symplectic diffeomorphism such that $f(\Tc)=\Tc$, $(f^n_{|\Tc})_{n\in\N}$ is equilipschitz  and such that $f_{|\Tc}$ is minimal. We have then noticed that $p=\dim(\ker \omega_{|T_x\Tc})$ is constant along $\Tc$ and such that $n-p$ is even. As $\Tc$ is not Lagrangian, then $p=1$:
$$\forall x\in\Tc, \dim\left( \ker\omega_{|T_{x}\Tc}\right)=1.$$
We choose an orientation on $\Tc$. We denote by $P_0(x)\subset T_x\Tc$ a $2$-plane that is transverse to $\ker\omega_{|T_x\Tc}$ (it may be the orthogonal subspace to $\ker\omega_{|T_x\Tc}$ for some fixed Riemannian metric) and that continuously depends on $x\in\Tc$. Let us now choose $u(x), v(x)\in P_0(x)$ such that $\omega (u(x), v(x))>0$, and let us complete it with $X(x)\in \ker\omega_{|T_x\Tc}$ such that $(u(x), v(x), X(x))$ is oriented and $\|ÊX(x)\|=1$ (for some fixed Riemannian metric). Clearly,     $\ker\omega_{|T_x\Tc}$ and $X$ continuously depend on $x$ (even if $u(x)$ and $v(x)$ may depend on $x$ in a non-continuous way). In fact, the dependence is $C^1$ if $\Tc$ is $C^2$.  Hence we can define the flow $(\varphi_t)$ of $X$.  

As $f$ is symplectic, there exists a $C^1$ map $\lambda:\Tc\rightarrow \R^*$ such that: $\forall x\in \Tc, DfX(x)=\lambda (f(x))X(f(x))$.
We have: $Df^nX(x)=\lambda (f(x)).\lambda(f^2(x))\dots\break \lambda (f^n(x))X(f^n(x))$. As $(f_{|\Tc}^k)_{k\in\Z}$ is uniformly Lipschitz, we deduce that there exist two positive constants $C>c>0$ such that:
$$\forall x\in\Tc, \forall n\in\N, c\leq |\lambda (f(x)).\lambda(f^2(x))\dots \lambda (f^n(x))|\leq C.$$

Let us recall that: $\forall t\in\T, \gamma'(t)\in\ker\omega_{|T_{\gamma(t)}\Tc}$. If we reparametrize $\gamma$ and change its domain of definition, we have: $\gamma'(t)=X(\gamma(t))$, hence $\gamma$ is a periodic orbit of $X$ with fixed period $T>0$; if we use the notation $x_0=\gamma(0)$, we have $\varphi_T(x_0)=x_0$. Then for every $n\in\N$, $f^n(x_0)$ is a periodic orbit for $X$ whose period $T'$ satisfies: $\frac{T}{C}\leq T'\leq \frac{T}{c}$.
Hence every point of   the dense subset $\{ f^nx_0; n\in\Z\}$, is a periodic point for $X$ whose period is between $\frac{T}{C}$ and $\frac{T}{c}$. We deduce that all the points of $\Tc$ are periodic for $(\varphi_t)$ and they have a period between $\frac{T}{C}$ and $\frac{T}{c}$ (this period is not necessarily the minimal one).
\begin{lemma}\label{L1}
$\T$ acts smoothly and freely on $\Tc$ via a reparametrization of $X$.
\end{lemma}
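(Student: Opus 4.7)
The plan is to promote the $C^1$ non-vanishing vector field $X$ — whose orbits are all periodic with bounded periods — to a free smooth $\mathbb{T}$-action by reparametrization. The strategy has three ingredients: uniform control on the minimal period function $\tau:\mathcal{T}\to(0,+\infty)$, continuity of $\tau$, and a smooth rescaling $Y=X/\tau$.

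First I would establish uniform two-sided bounds on $\tau$. The upper bound $\tau\leq T/c$ is immediate from what has already been shown: every point is periodic with some (not necessarily minimal) period in $[T/C,T/c]$, and $\tau(y)$ divides any period of $y$. For the lower bound, I would use that $X$ is $C^1$ with $\|X\|=1$ on the compact manifold $\mathcal{T}$: covering $\mathcal{T}$ by finitely many flow boxes on which $X$ is rectified gives a uniform $\delta>0$ such that every orbit of $X$ restricted to time $[0,\delta]$ is embedded, whence $\tau\geq\delta$.

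The heart of the proof is the continuity of $\tau$ on $\mathcal{T}$. Lower semicontinuity is immediate: if $x_n\to y$ and $\tau(x_n)\to a$, then by continuity of the flow $\varphi_a(y)=y$, so $a$ is a period of $y$; combined with $a\geq\delta>0$ this forces $a\in\tau(y)\mathbb{N}^{*}$, hence $a\geq \tau(y)$. For the upper semicontinuity, I would pick a $C^1$ transversal $\Sigma$ to $X$ at a point $y$, let $r:\Sigma\to(0,\infty)$ denote the smooth first-return time (with $r(y)=\tau(y)$), and consider the integer-valued ratio $k(y')=r(y')/\tau(y')$, which lies in a bounded set of positive integers by the preceding step. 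To show that $k\equiv 1$ near $y$, I would exploit the $f$-invariance of the picture: since $\{f^n(x_0)\}$ is dense and the iterates $(f^n_{|\mathcal{T}})$ are equilipschitz, the specific periods of the points $f^n(x_0)$ remain bounded in $[T/C,T/c]$, and any upward jump of $\tau$ on an open set would, after pulling back by a suitable $f^n$, contradict the uniform bound $\tau\leq T/c$ obtained at $x_0$. (Alternatively, one can invoke the classical theorem of Wadsley/Epstein on $C^1$ flows whose orbits are all periodic with uniformly bounded minimal period.)

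Once $\tau$ is continuous, the identity $\tau=r$ near each $y$ upgrades $\tau$ to a $C^1$ function on $\mathcal{T}$. Setting $Y=X/\tau$ yields a $C^1$ non-vanishing vector field; because $\tau$ is constant along each $X$-orbit, $Y$ traverses every orbit in exactly unit time. Its flow $(\psi_t)$ therefore satisfies $\psi_1=\mathrm{id}_{\mathcal{T}}$ and $\psi_t(x)\neq x$ for $x\in\mathcal{T}$ and $t\in(0,1)$, which is precisely a smooth free action of $\mathbb{T}=\mathbb{R}/\mathbb{Z}$ on $\mathcal{T}$, obtained by reparametrizing $X$.

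The main obstacle I anticipate is the upper semicontinuity of $\tau$: generically, a non-vanishing smooth vector field on a compact $3$-manifold with all orbits periodic may exhibit Seifert-type exceptional fibers around which the minimal period jumps upward. Ruling out this phenomenon in the present situation is what genuinely uses the $f$-invariance together with the density of $\{f^n(x_0)\}$ and the equilipschitz hypothesis — everything else in the argument is soft.
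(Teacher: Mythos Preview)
Your overall architecture matches the paper's: bound $\tau$ above and below, show $\tau$ is continuous, upgrade to $C^1$ via the first-return time, then rescale $X$ by $\tau$. You also correctly isolate the crux --- ruling out the period-multiplying (Seifert exceptional-fiber) phenomenon --- and you correctly recognise that this is the one place where minimality of $f$ and the equilipschitz control are genuinely needed.

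However, the argument you give for this crux is not yet a proof. Two issues. First, a small technical slip: the ratio $r(y')/\tau(y')$ is \emph{not} integer-valued, because the first return to $\Sigma$ need not land back at $y'$; what is true is that the orbit of $y'$ meets $\Sigma$ in finitely many points and the first-return map $P$ satisfies $P^{m(y')}(y')=y'$ for some integer $m(y')\geq 1$, with $\tau(y')=\sum_{j=0}^{m(y')-1} r(P^j y')$. Continuity of $\tau$ is exactly the statement $m\equiv 1$. Second, and more importantly, your sentence ``any upward jump of $\tau$ on an open set would, after pulling back by a suitable $f^n$, contradict the uniform bound $\tau\leq T/c$'' does not produce a contradiction as written: a single pullback by an equilipschitz $f^n$ changes $\tau$ by a bounded factor and cannot violate a global bound. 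The paper's mechanism is an \emph{iterated amplification}: one first observes that the set of discontinuity points of $\tau$ is $f$-invariant (this is the geometric content: a discontinuity is witnessed by $m\geq 2$ on a local transversal, and $f$ carries transversals to transversals up to bounded distortion). Then, at a discontinuity point $z$, lower semicontinuity together with density of the $f$-orbit lets one find iterates $f^{k_n}(z)\to z$ with $\tau(f^{k_n}(z))\geq \tfrac{3}{2}\tau(z)$ for large $n$. Since each $f^{k_n}(z)$ is again a discontinuity point, the construction repeats, producing a sequence $f^{m_n}(z)$ along which $\tau$ grows geometrically --- contradicting the upper bound $\tau\leq T/c$. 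This is the step your sketch is missing. The Wadsley--Epstein route you mention does not sidestep it, as you yourself note.
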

\begin{proof}[Proof of lemma \ref{L1}]
Let us notice that the fact that all the points of $\Lc$ are periodic for $X$ with an upper  bound for the period does not imply that a reparametrization of the flow defines a smooth and free action of $\T$: you can have a period doubling close to a fixed periodic point. The argument that will give the result is the fact that $f$ is minimal.

 Let us denote by $\tau(x)$ the minimal period of $x\in\Tc$  for $X$. As $X$ has no zero  and $\Tc$ is compact, $\tau$ is bounded from below by some positive constant. Hence there exist $0<a<b=\frac{T}{c}$ such that $\forall x\in\Tc, a\leq \tau (x)\leq b$. Moreover, $\tau$ is lower semi-continuous. Let us assume that $\tau$ is not continuous at some $x_0\in\Tc$.  Then there exists a sequence $(y_n)\in\Tc$ such that $\displaystyle{\lim_{n\rightarrow \infty} y_n=x_0}$ and $\displaystyle{\lim_{n\rightarrow \infty}\tau(y_n)>\tau (x_0)}$. Every period of $x_0$ being a multiple of $\tau(x_0)$, we have then: $\displaystyle{\lim_{n\rightarrow \infty}\tau(y_n)\geq 2\tau (x_0)}$.

 Let us notice that the set of such points of discontinuity of $\tau$ is invariant by $f$: the fact that $\displaystyle{\limsup_{y\rightarrow x_0}\tau(y)\geq 2\tau(x_0)}$ means that if you choose a small surface of section $N$ at $x_0$ for $X$, then there exists close to $x_0$ periodic points for the first return map whose period is at least 2.

  Because $f$ is minimal and $\tau$ is lower semi-continuous, there exists an increasing sequence $(k_n)\in\N$ such that $\displaystyle{\lim_{n\rightarrow \infty}f^{k_n}(x_0)=x_0}$ and $\displaystyle{\lim_{n\rightarrow \infty} \tau(f^{k_n}(x_0))\geq 2\tau (x_0)}$. Using this and the fact that the set of the points of discontinuity of $\tau$ is invariant by $f$, we build an increasing  sequence $(m_n)$ of integers such that: $\tau (f^{m_{n+1}}(x_0))\geq \frac{3}{2}\tau(f^{m_n}(x_0))$. This contradicts the fact that all the minimal periods are between $a$ and $b$.

Hence the minimal period $\tau$ continuously depends on the considered point. A classical result (implicit function theorem) then implies that $\tau$ is $C^1$.
\end{proof}
We will denote the reparametrization of $(\varphi_t)$ that describes a free and smooth action of $\T$ by $(\psi_t)$ and the corresponding vector field by $Y$.

We   consider the equivalence   relation  $\Rc$ defined on $\Lc$ by $(\varphi_t)$: $x\Rc y$ if for some $t\in\R$, we have $y=\varphi_t(x)$. Then $P:\Tc\rightarrow \Lc/\Rc$ is a fiber bundle whose base space is  a 2-dimensional   closed manifold denoted by $\Sc$ and whose fiber is $\T$.  In other words, we can apply corollary \ref{Co0} to conclude.

\subsection{Proof of proposition \ref{P1}}
We  consider the embedding $j: \T^3\rightarrow \T^3\times \R^3$ that is defined by:
$$j(\theta_1, \theta_2, \theta_3)= (\theta_1, \theta_2, \theta_3; \cos(2\pi\theta_3), \sin(2\pi\theta_3), 0).$$
The submanifold $\Lc=j(\T^3)$ is invariant by the Hamiltonian flow of $H(\theta;r)=\frac{1}{2}(r_1^2+r_2^2+r_3^2)$ that is the geodesic flow for the flat metric. For the flat metric, all the orbits have no conjugate points, hence $\Lc$ has  no conjugate points.

Let us fix $\theta\in\T^3$. The tangent subspace  $T_{j(\theta)}\Lc$ is generated by $e_1$, $e_3$, $e_3$ where $e_1=(1, 0,0;0,0,0)$, $e_2=(0, 1, 0; 0,0,0)$,
 $e_3= (0, 0, 1; -2\pi\sin(2\pi\theta_3), 2\pi\cos(2\pi\theta_3),\break 0)$. We have:  $\omega(e_1, e_2)=0$ and $\omega (ae_1+be_2, e_3)=2\pi(b\cos(2\pi\theta_3)-a\sin(2\pi\theta_3))$. Hence the kernel of the restriction of $\omega$ to $T_{j(\theta)}\Lc$ is the line $D(\theta)$ generated by the characteristic field $X(j(\theta))=(\cos(2\pi\theta_3), \sin(2\pi\theta_3), 0;0,0,0)$ that is too the Hamiltonian vector field of $H$.

We can integrate this vectorfield $X$ along $\Lc$, its flow $(\psi_t)$ is defined by: $$\psi_t(j(\theta_1, \theta_2, \theta_3))=j(\theta_1+t\cos(2\pi\theta_3), \theta_2+t\sin(2\pi\theta_3), \theta_3)$$ and then is conjugated (via $j$) to the   flow $g_t$ of $\T^3$ defined by :  $g_t(\theta_1, \theta_2, \theta_3)=(\theta_1+t\cos(2\pi\theta_3), \theta_2+t\sin(2\pi\theta_3), \theta_3)$. The foliation of $\T^3$ by the 2-dimensional tori $\Tc_{\theta_3}=\T^2\times\{ \theta_3\}$ is invariant by $(g_t)$.

Moreover, there are two cases:
\begin{itemize}
\item either $\tan(2\pi\theta_3)\in\Q\cup\{ \infty\}$ and the orbit of every point of $\Tc_{\theta_3}$ is periodic;
\item or  $\tan(2\pi\theta_3)\notin\Q\cup\{ \infty\}$ and the orbit of every point of $\Tc_{\theta_3}$ is dense in $\Tc_{\theta_3}$.
\end{itemize}
Let us assume that $f:\T^3\times\R^3\rightarrow \T^3\times \R^3$ is a symplectic  diffeomorphism such that $f(\Lc)=\Lc$. Then $Df(\R X (j(\theta)))=\R X(f\circ j(\theta))$ because $f$ is symplectic. Hence the image by $f$ of every orbit for $(g_t)$ is another orbit for $(g_t)$, and even the image of a periodic orbit is a periodic orbit, the image of a non-periodic orbit is a non-periodic orbit. Hence the image of an irrational leaf (that is the closure of a non-periodic orbit of $(\psi_t)$) is  an irrational leaf, and by using a limit,  the image of a rational leaf is a rational leaf. Hence there exists an homeomorphism $h:\T\rightarrow \T$ such that:
$$\forall \theta_3\in\T, f(\Lc_{\theta_3})=\Lc_{h(\theta_3)}.$$
and then there exists a continuous   family $(f_{\theta_3})_{\theta_3\in\T}$ of diffeomorphisms $ f_{\theta_3}:\T^2\rightarrow\T^2$ such that:
$$\forall (\theta_1, \theta_2, \theta_3)\in\T^3, f(j(\theta_1, \theta_2, \theta_3))=j(f_{\theta_3}(\theta_1, \theta_2), h(\theta_3)).$$
In other terms, $f_{|\Lc}$ is conjugated to $F:\T^3\rightarrow \T^3$ defined by $$F(\theta_1, \theta_2, \theta_3)= (f_{\theta_3}(\theta_1, \theta_2), h(\theta_3)).$$Then $(f_{\theta_3})_{\theta_3\in\T}$ is an isotopy, hence the action of $f_{\theta_3}$ on $H^1(\T^2, \R)$ is independent of $\theta_3$, and can be represented by a matrix $A\in SL(2, \Z)$. Let us consider the rational leaf $\Lc_0$. Then $\Tc_0$ is foliated by periodic orbits for $(g_t)$ and their homology class is $\begin{pmatrix} 1\\0\end{pmatrix}$. Then $\Tc_{h^n(0)}=F^n(\Tc_0)$ is foliated by periodic orbits whose homology class is $A^n\begin{pmatrix} 1\\0\end{pmatrix}$.

There are two cases:
\begin{enumerate}
\item[$\bullet$]  either the sequence$\left( A^n\begin{pmatrix} 1\\0\end{pmatrix}\right)_{n\in\N} $ is bounded. Because all these homology classes are in the lattice $\Z^2$, there are  only a  finite number of possible values for the terms of this sequence, hence  there exists $N\geq 1$ such that $A^N \begin{pmatrix}1\\ 0\end{pmatrix}= \begin{pmatrix}1\\ 0\end{pmatrix}$. Then $F^N(\Tc_0)$ is foliated by periodic orbits for $(g_t)$ whose homology class is $\begin{pmatrix} 1\\0\end{pmatrix}$, and then   $F^N(\Tc_0)=\Tc_0$: this implies that $F$, and then $f$ cannot be minimal;
\item[$\bullet$] or the sequence$\left( A^n\begin{pmatrix} 1\\0\end{pmatrix}\right)_{n\in\N}$ is unbounded. As $A\in SL(2, \Z)$, this implies that $A$ is parabolic or hyperbolic and that $\displaystyle{\lim_{n\rightarrow +\infty} \left\|  A^n\begin{pmatrix} 1\\0\end{pmatrix}\right\| =+\infty}$. Moreover, the sequence  $\left( A^n\begin{pmatrix} 1\\0\end{pmatrix}\right)_{n\in\N}$ converges in the projective sense. Let $\pm\begin{pmatrix} \cos2\pi \alpha\\ \sin 2\pi\alpha \end{pmatrix}$ be the two vectors with norm 1 that represent this  projective limit (they are eigenvectors for $A$). As  $  A^n\begin{pmatrix} 1\\0\end{pmatrix} $ is a multiple of\break $\begin{pmatrix}\cos(2\pi h^n(0))\\ \sin(2\pi h^n(0))\end{pmatrix}$, we deduce that the sequence $(h^n(0))_{n\in\N}$ has at most two limit points: $\alpha$ and $\alpha+\frac{1}{2}$. This implies that $F$, and then $f$ cannot be minimal.
 \end{enumerate}


The manifold  that we built just before is a graph. Changing the embedding $j$, we obtain a new submanifolf $\Lc=j(\T^3)$ that is invariant by the geodesic flow of the flat metric, such that the new characteristic flow $(\psi_t)$ is conjugated to the old one. Hence by a similar argument to the previous one, we obtain that the new manifolds satisfies the conclusions of proposition \ref{P1}.

Let us explain how we build the new $\Lc$. Let $\eta:\T\rightarrow \T$ be a smooth map with degree 1 that is not an homeomorphim. We define:
$$j(\theta_1, \theta_2, \theta_3)= (\theta_1, \theta_2,\eta( \theta_3); \cos(2\pi\theta_3), \sin(2\pi\theta_3), 0)
.$$
Then in this case $\Lc=j(\T^3)$ is homotopic to the zero section but is not a graph.

If we choose for $\eta:\T\rightarrow \T$   a smooth map with degree 0, then $\Lc=j(\T^3)$ is non homotopic to the zero section.

\section{Proof of theorem \ref{T1} and corollary \ref{C1}}\label{S3}
We assume that $H$ is a Tonelli Hamiltonian defined on $T^*M$, that $\Lc$ is an invariant $d$-dimensional submanifold with no conjugate points, and that there exists two sequences $(t_n)$ and $(s_n)$  tending to $+\infty$ such that the families $(\varphi_{t_n|\Lc})_{n\in\N}$ and $(\varphi_{-s_n|\Lc})_{n\in\N}$ are equilipschitz. This happens for example when $\varphi_T$ is Lipschitz conjugated to some rotation of the torus or the sphere for some $T\not=0$.

\subsection{The Green bundles}\label{ss31}
Let us recall some facts concerning the orbits with no conjugate vectors that are proved in \cite{Ar1}.

Along every locally minimizing orbit, there exists two (non continuous) invariant Lagrangian bundles in $T^*M$, that are transverse to the vertical bundle. They are called the {\em Green bundles} and denoted by $G_-$ and $G_+$. They satisfy the following properties.

If $K\subset T^*M$ is a compact subset with no conjugate points, then the distance between $G_\pm$ and the vertical bundle on $K$ is bounded from below by a strictly positive constant.


Assume that the orbit of $x\in T^*M$ is locally minimizing; let $v\in T_x(T^*M)$;
\begin{itemize}
\item  if $\displaystyle{\liminf_{t\rightarrow +\infty} \| D\varphi_t v\|<+\infty}$ then $v\in G_-$;
\item if $\displaystyle{\liminf_{t\rightarrow -\infty} \| D\varphi_t v\|<+\infty}$ then $v\in G_+$.
\end{itemize}


At every point where $G_-(x)=G_+(x)$, then $G_-$ and $G_+$ are continuous.

\subsection{A Lagrangian covering}
Let us fix a point $x\in \Lc$. We define Bouligand  contingent cone to $\Lc$ at $x$.

\begin{definition}
We write the definition in coordinates, but the definition is independent of the chosen chart. The   {\em contingent cone} to $\Lc$ at $x$ is the cone of $T_x(T^*M)$ denoted by {$C_x\Lc$} whose elements  are the limits~:
$$v=\lim_{ n\rightarrow \infty} \frac{x_n-x}{ t_n} $$
where $(x_n)$  is a sequence  of elements of $\Lc$ converging to $x$ and  $(t_n)$ is a sequence of elements of $\R_+^*$ converging to $0$.
\end{definition}
Because the $(\varphi_{t_n})$ are equilipschitz, then for every $v\in C\Lc$, the sequence $(D\varphi_{t_n}v)$ is bounded and then $v\in G_-$. Using the sequence $(-s_n)$ in a similar way we obtain: $\forall x\in\Lc, C_x\Lc\subset G_-(x)\cap G_+(x)$.
Moreover, as $\Lc$ is a $d$-dimensional  Lipschitz submanifold, the projection of $C_x\Lc$ on some $d$-plane is onto.  Because $G_-(x)$ and $G_+(x)$ are $d$-dimensional, we deduce: $C_x\Lc=G_-(x)=G_+(x)$.

Hence $\Lc$ is differentiable at $x$, its tangent subspace at $x$ is $G_-(x)$ and then $\Lc$ is Lagrangian. Moreover, as $G_-=G_+$ on $\Lc$, $G_-$ is continuous on $\Lc$ and then $\Lc$ is $C^1$.

As $T\Lc=G_-$ is transverse to the vertical, we finally deduce that $\pi$ is a local diffeomorphism at every point of $\Lc$. Moreover, as $\Lc$ is closed, $\pi(\Lc)$ is a  compact and connected submanifold of $M$ that has same dimension as $M$. Because $M$ is compact and connected, we have then $\pi(\Lc)=M$. Hence $\pi_{|\Lc}$ is a covering map of $M$.

\subsection{A theorem due to Arnol'd}

Let us recall Arnol'd following result:
\begin{proposition}
Let $N\subset  T^*\T^d$ be a closed Lagrangian submanifold such that $\pi_{|N}$ is a covering map; then $\pi_{|N}$ is a diffeomorphism.
\end{proposition}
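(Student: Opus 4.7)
The plan is to lift $N$ to the universal cover of the base $\T^d$, use the Lagrangian hypothesis to write the pieces of the lift as graphs of gradients, and then use a compactness/critical-point argument to rule out any covering degree $k>1$. The principal work will be the bookkeeping of the constants that appear when the $\Z^d$-translations permute the sheets; the genuinely substantive step is the last one, where I exploit the fact that a smooth function on a compact torus must have critical points.

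First, reduce to the case $N$ connected and pull $N$ back under the quotient $\R^d\times\R^d\to \T^d\times\R^d=T^*\T^d$, $(q,p)\mapsto(q\bmod\Z^d,p)$, obtaining a closed, $\Z^d$-invariant, Lagrangian submanifold $\tilde N\subset\R^d\times\R^d$ whose projection onto $\R^d$ is still a $k$-sheeted covering. Since $\R^d$ is simply connected this covering is trivial, so $\tilde N=\tilde N_0\sqcup\cdots\sqcup\tilde N_{k-1}$ with each $\tilde N_j$ projecting diffeomorphically onto $\R^d$; being Lagrangian, each component is the graph of a closed $1$-form, and by the Poincar\'e lemma on $\R^d$ this form is exact: $\tilde N_j=\{(q,\nabla S_j(q))\}$ for some smooth $S_j:\R^d\to\R$.

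Next, unpack the $\Z^d$-action by base translations: it permutes the components through permutations $\sigma_n$ of $\{0,\ldots,k-1\}$, and the identity $\tau_n\tilde N_j=\tilde N_{\sigma_n(j)}$ translated into coordinates gives $\nabla S_{\sigma_n(j)}(q)=\nabla S_j(q-n)$, hence $S_{\sigma_n(j)}(q)=S_j(q-n)+c(n,j)$ for some constants $c(n,j)$. Connectedness of $N=\tilde N/\Z^d$ forces the action of $\Z^d$ on $\{0,\ldots,k-1\}$ to be transitive, so the stabiliser $\Lambda:=\{n\in\Z^d:\sigma_n(0)=0\}$ is a sublattice of index $k$; moreover $\nabla S_0$ is $\Lambda$-periodic.

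Assuming $k\ge 2$, pick $n_\ast\in\Z^d\setminus\Lambda$ and set $j=\sigma_{n_\ast}(0)\ne 0$. Since $\tilde N_0$ and $\tilde N_j$ are distinct connected components of $\tilde N$ they are disjoint in $\R^d\times\R^d$, whence $\nabla S_0(q)\ne\nabla S_j(q)$ for every $q\in\R^d$. The idea is then to consider
$$h(q):=S_0(q)-S_j(q)=S_0(q)-S_0(q-n_\ast)-c(n_\ast,0).$$
The $\Lambda$-periodicity of $\nabla S_0$ implies that $S_0(q+\lambda)-S_0(q)$ is a constant in $q$ for each $\lambda\in\Lambda$, from which one checks $h(q+\lambda)=h(q)$; so $h$ descends to a smooth function on the compact torus $\R^d/\Lambda\cong N$. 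At a maximum of $h$ the gradient $\nabla h=\nabla S_0-\nabla S_j$ must vanish, contradicting the disjointness of the two graphs. This forces $k=1$, so $\pi_{|N}$ is a diffeomorphism.
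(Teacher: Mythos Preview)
Your proof is correct. One quibble: the opening ``reduce to the case $N$ connected'' is not a reduction at all---if $N$ were disconnected the statement would simply be false (two disjoint constant sections of $T^*\T^d$ form a closed Lagrangian submanifold with $\pi_{|N}$ a nontrivial covering). Connectedness is an implicit hypothesis here, as it is in the paper's application; just say so.

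Your route differs from the paper's. The paper actually proves the more general statement for an arbitrary closed base $M$ (Proposition~\ref{P3}): it first averages the Liouville form over the sheets to produce a closed $1$-form $\Lambda$ on $M$ such that $N-\Lambda$ is \emph{exact} Lagrangian (Lemma~\ref{L2}), and then, using a primitive $S:N\to\R$ of the Liouville form, studies the fibrewise minimum $m(q)$ and maximum $M(q)$ of $S$; at a point where $M-m$ is maximal, a semi-concavity/semi-convexity argument forces two distinct sheets to have the same derivative, a contradiction (Lemma~\ref{L3}). You instead lift to the universal cover $\R^d$, write each sheet as a gradient graph $\nabla S_j$, and compare two specific sheets via $h=S_0-S_j$, which you show descends to the compact intermediate torus $\R^d/\Lambda$; a critical point of $h$ gives the same contradiction. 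Your argument is more elementary---no semi-concave analysis, just calculus on a torus---but it relies on the abelian structure of $\pi_1(\T^d)$ and the explicit lattice bookkeeping, so it is specific to $\T^d$. The paper's argument costs a bit more machinery but buys the result for every closed base manifold.
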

 A proof of it is provided in \cite{He1}. It is well-known that the result is true if you replace $\T^d$ by any closed manifold. To be complete, we give a proof of:

 \begin{proposition}\label{P3}
Let $N\subset  T^*M$ be a closed Lagrangian submanifold such that $\pi_{|N}$ is a covering map; then $\pi_{|N}$ is a diffeomorphism.
\end{proposition}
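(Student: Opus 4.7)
The strategy is to pass to the universal cover of $M$, describe $\tilde N$ there as a disjoint union of graphs of exact $1$-forms, and, if the covering $\pi_{|N}$ has degree $>1$, derive a contradiction by producing a smooth function on a closed manifold whose differential is nowhere zero.

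After splitting off components, assume $N$ is connected. Let $p:\tilde M\to M$ be the universal cover with deck group $\Gamma=\pi_1(M)$, and let $\tilde p:T^*\tilde M\to T^*M$ be the induced symplectic covering (it pulls the Liouville form $\alpha_M$ back to $\alpha_{\tilde M}$). The preimage $\tilde N:=\tilde p^{-1}(N)$ is a Lagrangian submanifold of $T^*\tilde M$ whose projection to $\tilde M$ is a covering of degree $k=\deg\pi_{|N}$. Because $\tilde M$ is simply connected, each connected component of $\tilde N$ covers $\tilde M$ diffeomorphically, and being Lagrangian is the graph of a closed, hence exact, $1$-form:
\[
\tilde N=\bigsqcup_{i=1}^{k}\mathrm{graph}(dh_i),\qquad h_i\in C^{\infty}(\tilde M,\R),
\]
the $h_i$ being defined up to an additive constant. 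Embeddedness of $\tilde N$ yields
\[
dh_i(\tilde m)\neq dh_j(\tilde m)\quad\text{for all }\tilde m\in\tilde M,\ i\neq j.\qquad (*)
\]

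The group $\Gamma$ acts symplectically on $T^*\tilde M$, preserves $\tilde N$ and permutes its components; connectedness of $N$ forces the action to be transitive on $\{\tilde N_1,\ldots,\tilde N_k\}$. Its kernel $K$---a normal subgroup of $\Gamma$ of finite index since $\Gamma/K\hookrightarrow S_k$---yields a closed finite cover $M_K:=\tilde M/K$ of $M$. For $\eta\in K$ the identity $\eta\cdot\mathrm{graph}(dh_i)=\mathrm{graph}(dh_i)$ reads $h_i\circ\eta^{-1}=h_i+c_i(\eta)$ for group homomorphisms $c_i:K\to\R$. The key claim is that $c_i\equiv c_j$ on $K$. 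Given this, the difference $h_i-h_j$ is $K$-invariant, descends to a smooth function $F_{ij}$ on the closed manifold $M_K$, and by $(*)$ has $dF_{ij}$ nowhere zero on $M_K$---contradicting the existence of a critical point of any smooth function on a closed manifold. Hence $k=1$ and $\pi_{|N}$, a bijective local diffeomorphism between closed manifolds of the same dimension, is a diffeomorphism.

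For the key claim, pick $\gamma\in\Gamma$ with $\gamma\cdot\tilde N_1=\tilde N_j$; then $h_j=h_1\circ\gamma^{-1}+\mathrm{const}$, and using normality of $K$ a direct computation gives $c_j(\eta)=c_1(\gamma^{-1}\eta\gamma)$ for every $\eta\in K$. When $\Gamma$ is abelian---the case $M=\T^d$ treated by Arnold---the conjugation is trivial and $c_j=c_1$ follows immediately. The non-abelian case is the main technical obstacle I foresee: one must check that the class $c_1\in H^1(M_K,\R)=\mathrm{Hom}(K,\R)$ is invariant under the $\Gamma/K$-action on cohomology. The standard reduction normalizes $N$ by the fibrewise symplectic translation by $-\bar\beta$, where $\bar\beta:=\frac{1}{k}(\pi_{|N})_{*}(\alpha|_N)$ is the closed $1$-form on $M$ obtained by averaging the Liouville form along the fibers of $\pi_{|N}$; this arranges $\sum_i h_i\equiv 0$, hence $\sum_i c_i\equiv 0$ on $K$, which combined with the permutation identity $c_j=c_1\circ\mathrm{Ad}(\gamma^{-1})$ coming from transitivity of $\Gamma/K$ reduces one to the abelian-style argument.
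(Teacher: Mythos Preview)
Your overall architecture is sound and your normalisation by $\bar\beta=\frac{1}{k}(\pi_{|N})_{*}(\alpha|_N)$ is exactly the reduction the paper makes (its Lemma~\ref{L2}). However, the sentence that is supposed to close the non-abelian case is a genuine gap. From $\sum_i h_i\equiv 0$ you deduce only $\sum_i c_i\equiv 0$ on $K$, and you combine this with $c_j=c_1\circ\mathrm{Ad}(\gamma_j^{-1})$ to ``reduce to the abelian-style argument''. But these two facts do \emph{not} imply $c_i\equiv c_j$: already for $k=2$ one can have $c_2=-c_1$ with $c_1\neq 0$ if conjugation by $\gamma$ acts by $-1$ on the line $\R c_1\subset\mathrm{Hom}(K,\R)$. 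Nothing you have written excludes this. What your normalisation \emph{actually} buys is much stronger than $\sum_i c_i=0$: it makes the translated $N$ exact Lagrangian, so $\alpha|_N=dS$ for a global $S:N\to\R$. The lift $\tilde S$ to $\tilde N$ is then $\Gamma$-invariant, and on the sheet $\tilde N_i$ one has $\tilde S=h_i+a_i$ for constants $a_i$; invariance under $\eta\in K$ forces $h_i(\eta\tilde m)=h_i(\tilde m)$, i.e.\ $c_i\equiv 0$ individually. This repairs your argument.

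Once the gap is filled, your route and the paper's diverge in the second half and it is worth seeing both. You pass to the finite closed cover $M_K=\tilde M/K$ and produce a smooth function $h_i-h_j$ on it with nowhere-vanishing differential. The paper stays on $M$ itself: having the primitive $S:N\to\R$, it sets $m(q)=\min_{\pi^{-1}(q)}S$ and $M(q)=\max_{\pi^{-1}(q)}S$; then $m$ is semi-concave, $M$ is semi-convex, and at a maximiser $q_0$ of $M-m$ both are differentiable with the same derivative---but that derivative must equal two distinct points $s_i(q_0)\neq s_j(q_0)$ of $N$ above $q_0$, a contradiction. The paper's argument is shorter and avoids the bookkeeping with $K$, the conjugation identities, and the passage to $M_K$; your approach has the merit of making the role of $\pi_1$ explicit, but the price is precisely the delicacy you stumbled over.
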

\begin{proof}[Proof of proposition \ref{P3}]
\begin{lemma}\label{L2}
There exists a closed one form $\Lambda$ on $M$ such that if $F: T^*M\rightarrow T^*M$ is the symplectic diffeomorphism defined by $F(q,p)=(q, p-\Lambda(q))$, then $F(N)$ is  exact Lagrangian.\end{lemma}
\begin{proof}[Proof of lemma \ref{L2}]As $\pi_{|N}$ is a $m$-fold covering map, we can find for every $q\in M$ $m$ sections $s_1, \dots ,s_m$ of $\pi_{|N}$ defined in a neighborhood $U$ of $q$. Then we define $\Lambda$ in $U$ by: $\displaystyle{\Lambda=\frac{1}{m}\sum_{i=1}^ms_i^*(\nu)}$ where $\nu$ is the 1-form of Liouville. Then $\Lambda$ does not depend on the chart we choose and is closed. We define $F$ as in the lemma and prove now that $F(N)$ is exact Lagrangian. Let $\gamma_1:\T\rightarrow N$ be a closed loop in $N$ and let us denote the projected loop by $\Gamma=\pi\circ\gamma_1$.  Let $\gamma_1,\dots ,\gamma_m$ be the $p$ lifted loops of $\Gamma$. Then every $\gamma_i$ is the image of $\gamma_1$ by some Deck transform $D_i:N\rightarrow N$ of the covering map.  But such a Deck transformation being fiber preserving satisfies above each $q_0\in M$ (with the same notation as at the beginning of the proof): $D_i(q, s_1(q))=(q, s_i(q))$ and then $D_i^*\nu=\nu$. We deduce:
$$\int_{\gamma_i}\nu=\int_{\gamma_1}\nu;$$
and then:
$$\int_{\Gamma}\Lambda=\frac{1}{m}\sum_{i=1}^m\int_{\gamma_i}\nu=\int_{\gamma_1}\nu;$$
this gives the lemma.\end{proof}
Then we can assume that $N$ is exact Lagrangian. The proposition is then a consequence of:
\begin{lemma}\label{L3}
Let $N\subset  T^*M$ be a closed exact Lagrangian submanifold such that $\pi_{|N}$ is a covering map; then $\pi_{|N}$ is a diffeomorphism.

\end{lemma}
\begin{proof}[Proof of lemma \ref{L3}]
As $N$ is exact symplectic, the Liouville 1-form $\nu$ admits a primitive $S: N\rightarrow \R$ along $N$ that is a $C^2$ function. Then we define for every $q$ in $M$: $m(q)=\inf\{ S(x); x\in N, \pi(x)=q\}$ and $M(q)=\sup\{ S(x); x\in N, \pi(x)=q\}$.

Then $m$ is semi-concave and $M$ is semi-convex (see for example the appendix of \cite{Be1} for then definitions and properties of semi-concave functions).  Let $q_0$ be a point where $M-m$ is maximal.  Then  $M$ and $m$ are differentiable at $q_0$ and have the same derivative $\Dc$. 

Let us assume that $\pi_{|N}$ is $m$-fold with $m\geq 2$. There is a neighborhood $U$ of $q_0$ in $M$and $m$ sections $s_1, \dots , s_m$ above $U$ that define $N\cap \pi^{-1}(U)$.  Hence there exists $i\not=j$ such that $M(q_0)=S(s_j(q_0))$ and $m(q_0)=S(s_i(q_0))$; let us notice that we have by definition of $S$: $d(S\circ s_i)(q_0)=s_i(q_0)\not= s_j(q_0)=d(S\circ s_j)(q_0)$.

We have then: $S(s_j(q_0))=M(q_0)$ and $\forall q\in U, S(s_j(q))\leq M(q)$. The two functions being differentiable at $q_0$, we deduce: $d(S\circ s_j)(q_0)=dM(q_0)=\Dc$. By a similar argument, we obtain: $d(S\circ s_i)(q_0)=dm(q_0)=\Dc$. This contradicts $d(S\circ s_j)(q_0)\not= d(S\circ s_i)(q_0)$.
\end{proof}\end{proof}
\section{Proof of theorem \ref{T2}, corollaries \ref{C4} and   \ref{C3}}
\subsection{Proof of theorem \ref{T2} and corollary \ref{C4}}
\subsubsection{Proof of theorem \ref{T2}}
 Let $M$ be a closed $n$-dimensional manifold and let $N\subset T^*M$ be a closed $n$-dimensional $C^1$ submanifold with no conjugate points  that is invariant by some Tonelli flow $(\varphi_t)$. We assume that for a dense subset $D\subset N$ then:
 $$\forall x\in D, \forall v\in T_xN, \min\{ \liminf_{t\rightarrow +\infty}\| D\varphi_tv\|, \liminf_{t\rightarrow -\infty} \| D\varphi_tv\|\}<+\infty.$$
 By the dynamical criterion that we recalled in subsection \ref{ss31}, this implies that for every $x\in D$, we have: $T_xN\subset G_-(x)\cup G_+(x)$ and then that $T_xN\subset G_-(x)$ or $T_xN\subset G_+(x)$ because $T_xN$ is a linear space. Hence for every $x$ in $D$, $T_xN$ is Lagrangian and  the distance between $T_xN$ and the vertical at $x$ is bounded from below by a constant  that does not depend on $x\in D$. As $T_xN$ continuously depend on $x$, we deduce that $T_xN$ is Lagrangian and transverse to the vertical bundle at every point of $N$. We then conclude as in section \ref{S3}.
   \subsubsection{Proof of corollary \ref{C4}}
    Let $M$ be a closed $n$-dimensional manifold and let $N\subset T^*M$ be a closed $n$-dimensional $C^1$ submanifold with no conjugate points  that is invariant by some Tonelli flow $(\varphi_t)$. We assume that there exists a compact $C^1$ Lie group $G$ that acts on $N$ and that contains the diffeomorphisms  $ \varphi_{t|N} $ for $t\in\R$.  Then for every $x\in N$, the set $\{ D\varphi_t(x)_{|N}; t\in\R\}$ is compact and we can apply theorem \ref{T2}.
\subsection{Proof of corollary  \ref{C3}}

 Let $M$ be a closed $n$-dimensional manifold and let $N\subset T^*M$ be a closed $n$-dimensional $C^1$ submanifold with no conjugate points  that is invariant by some Tonelli flow $(\varphi_t)$. We assume that $(\varphi_{t|N})$ is such that:
   \begin{enumerate}
   \item[$\bullet$] the   non-wandering set for $(\varphi_{t|N})$ is a finite union of periodic orbits or critical points;
   \item[$\bullet$] all the periodic orbits of $(\varphi_{t|N})$ are non-degenerate in the following sense: if $\tau$ is the period of such a period point, the multiplicity of $1$  as an eigenvalue   of $D\varphi_{\tau|TN}$ is   one and $-1$ is not an eigenvalue; for the critical points, we assume the Hamiltonian is Morse at the critical points contained in $N$.
\end{enumerate}

  Let us study the periodic orbits. If $x$ is a periodic orbit with period $\tau>0$,  we know that $-1$ is not an eigenvalue of $D\varphi_\tau (x)$ and that $1$ is a simple eigenvalue, that corresponds to the flow direction. Let us prove that $D\varphi_\tau(x)$ has no eigenvalue with modulus 1 except $1$. If $e^{i\theta}$ is such an eigenvalue, then the exists a 2-plane $\Pc$ in $T_x(T^*M)$  that is invariant by $D\varphi_\tau(x)$ and such that $D\varphi_\tau(x)_{|\Pc}$ is symplectic and conjugated to a rotation. By the dynamical criterion (see subsection \ref{ss31}), this implies that $\Pc\subset G_-(x)$. Because the restriction of the symplectic form to $\Pc$ is non-zero,  this contradicts the fact that $G_-(x)$ is Lagrangian. Hence the periodic orbits that are in $N$ are hyperbolic. A similar argument implies that the critical points are hyperbolic. We denote by $\Oc_1, \dots , \Oc_m$ the periodic orbits (eventually critical) that are contained in $N$ and by $W^u(\Oc_i, (\varphi_{t|N}))$ and $W^s(\Oc_i, (\varphi_{t|N}))$ their stable and unstable manifolds.

  Because the non-wandering  set of $(\varphi_{t|N})$ is $\Oc_1\cup\dots \cup\Oc_m$, then $$\displaystyle{N=\bigcup_{1\leq i, j\leq n} W^s (\Oc_j,(\varphi_{t|N}))\cap W^u(\Oc_i,(\varphi_{t|N}))}.$$
  If $\Oc_i$ is not an attractive orbit for$(\varphi_{t|N})$ then $W^s(x_i,(\varphi_{t|N})$ is an immersed manifold whose dimension is less that $n$ and then has zero Lebesgue measure. We deduce that there is a dense set $D$ in $N$ such that for all $x\in D$, $\varphi_t(x)$ tends to a repulsive periodic orbit when $t$ tends to $-\infty$ and tends to an attractive periodic orbit when $t$ tends to $+\infty$.

 Let us consider $x\in D$.

We assume that $(\varphi_tx)$ tends to a critical attractive fixed point $x_0$ when $t$ tends to $+\infty$. We can choose $k\in ]0, 1[$ and a Riemannian metric such that in a neighborhood $\Vc$ of $x_0$: $\|D\varphi_{1|N}(y)\|\leq k$. If $t\geq T$ is great enough, $\varphi_tx$ belongs to $\Vc$ and $\| D\varphi_1(\varphi_tx)\|\leq k$. We deduce:
$$\forall n\in \N, \| D\varphi_{T+n}(x)\|\leq \| D\varphi_T(x)\|\prod_{i=0}^{n-1}\| D\varphi_{1|N}(\varphi_{T+i}x)\|\leq \| D\varphi_{T|N}(\varphi_T(x))\|k^n;$$
hence the sequence $(D\varphi_{T+n}(x))_{n\in\N}$ is bounded. 

If  $(\varphi_tx)$ tends to a true attractive periodic orbit $\Oc$, then $\Oc$ is a normally hyperbolic (attractive) submanifold for $(\varphi_{t|N})$. Then there exist $x_0\in \Oc$ such that $x\in W^s(x_0)$ (see for example \cite{HPS}). Any vector of $T_xN$ can be written as the sum of $\lambda X(x)$ where $X$ is the Hamiltonian vector field and a vector tangent $v$ to $W^s(x_0)$. Then $D\varphi_t(x)X(x)=X(\varphi_tx)$ is bounded and $D\varphi_t(x)v$ tends to $0$ when $t$ tends to $+\infty$. Finally, the family  $(D\varphi_t(x))_{t>0}$ is bounded.

We have then proved that we can apply theorem \ref{T2}.

\section{Appendix}\label{s4}
We will prove the more or less classical result that we mentioned in the introduction:
\begin{proposition}\label{P4}
Let $\Lc\subset M$ be a closed Lagrangian submanifold of a compact symplectic manifold $(M,\omega)$. Let $f:\Lc\rightarrow \Lc$ be a $C^1$ diffeomorphism that is isotopic to identity. Then there exists a symplectic $C^1$ diffeomorphism $F:M\rightarrow M$ such that $F_{|\Lc}=f$.
\end{proposition}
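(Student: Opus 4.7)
The plan is to combine Weinstein's Lagrangian neighborhood theorem with a standard Hamiltonian ``lift'' of an isotopy, and then globalize by a cutoff. First, I would invoke Weinstein's theorem to produce an open neighborhood $U$ of $\Lc$ in $M$, an open neighborhood $V$ of the zero section in $(T^*\Lc, d\nu)$, and a symplectomorphism $\Phi: U \to V$ such that $\Phi|_\Lc = \mathrm{id}$ under the identification of $\Lc$ with the zero section of $T^*\Lc$. This reduces the problem to constructing a compactly supported symplectic isotopy of $T^*\Lc$ whose time-$1$ map restricts to $f$ on the zero section; we can then pull back by $\Phi$ and extend by the identity to get the desired $F$ on $M$.

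Next, since $f$ is isotopic to the identity, choose a $C^1$ isotopy $(f_t)_{t\in[0,1]}$ with $f_0=\mathrm{id}$, $f_1=f$, and let $X_t$ be the associated time-dependent vector field on $\Lc$ satisfying $\frac{d}{dt}f_t = X_t \circ f_t$. On $T^*\Lc$ define the fiberwise linear Hamiltonian
$$H_t(q,p) = p(X_t(q)).$$
In local Darboux coordinates, Hamilton's equations read $\dot q = X_t(q)$ and $\dot p = -(dX_t(q))^* p$, so the zero section is invariant and the induced flow on it is precisely $f_t$. By compactness of $\Lc$ and of $[0,1]$, there is a neighborhood $V'\subset V$ of the zero section on which the flow of $H_t$ is defined up to time $1$ and remains in $V$.

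Finally, pick a smooth cutoff $\chi: T^*\Lc \to [0,1]$ with $\chi \equiv 1$ on a neighborhood $V''$ of the zero section and $\mathrm{supp}(\chi) \subset V'$, and set $\tilde H_t = \chi H_t$. The Hamiltonian vector field of $\tilde H_t$ equals $\chi X_{H_t} + H_t\, X_\chi$; along the zero section $H_t$ vanishes and $\chi \equiv 1$, so this coincides with $X_{H_t}$ there, and the time-$1$ flow of $\tilde H_t$ still restricts to $f$ on $\Lc$. Transport $\tilde H_t$ to a Hamiltonian on $U$ via $\Phi$, extend by $0$ outside $U$, and take the time-$1$ flow $F$ on $M$; this $F$ is a $C^1$ symplectic diffeomorphism of $M$ with $F|_\Lc = f$.

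The only genuinely delicate point is verifying that the cutoff $\chi$ does not spoil the dynamics on $\Lc$ itself; this is what forces the linear-in-$p$ choice $H_t(q,p) = p(X_t(q))$, which vanishes on the zero section and thereby kills the $H_t\,X_\chi$ correction exactly where we need to preserve the prescribed map $f$. Everything else, compactness-based definedness of flows, the extension-by-zero across $\partial U$, and the fact that a Hamiltonian flow is automatically symplectic, is routine.
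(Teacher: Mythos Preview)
Your reduction via Weinstein's theorem matches the paper's, but the Hamiltonian lift has a genuine regularity gap. Since $f$ is only $C^1$, the generating field $X_t=(\partial_t f_t)\circ f_t^{-1}$ is only $C^0$ in $q$ (or, if you first factor $f$ into $C^1$-small pieces and use a linear isotopy in charts, at best $C^1$). Your Hamiltonian $H_t(q,p)=p(X_t(q))$ is then at best $C^1$, its Hamiltonian vector field at best $C^0$, and the equation $\dot p=-(dX_t(q))^*p$ you write may not even make sense. Even if you bypass the ODE and write the flow explicitly as the cotangent lift $(q,p)\mapsto\bigl(f_t(q),((Df_t(q))^*)^{-1}p\bigr)$, this map involves $Df_t$ and is therefore only $C^0$; it cannot yield the $C^1$ symplectic $F$ the statement asks for. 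The cutoff step inherits the same defect.

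This loss of one derivative is precisely what the paper's construction is designed to recover. After the same Weinstein reduction and the same factoring of $f$ into diffeomorphisms $C^1$-close to the identity, the paper works in charts and builds a \emph{generating function} rather than a Hamiltonian: roughly $A(q,p)=(\eta_{\|p\|}*F_p)(q)$ with $F_p(q)=\tilde f(q)\cdot p$, a convolution of the $C^1$ data against a mollifier whose scale is $\|p\|$. The content of Lemma~\ref{L4} is that this $p$-dependent smoothing makes $A$ genuinely $C^2$ while still satisfying $\partial_p A(q,0)=\tilde f(q)$ and $\partial_q A(q,0)=0$, so the implicitly defined symplectic map is $C^1$ and restricts to $f$ on $\{p=0\}$. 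Your argument would go through verbatim for $f\in C^2$ (then $X_t$ can be taken $C^1$, $H_t$ is $C^2$, and the time-$1$ map is a $C^1$ symplectomorphism), but as written it does not establish the $C^1$ statement.
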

 \noindent{\bf Proof of proposition \ref{P4}} Because $f$ is isotopic to identity, we may write $f=f_n\circ f_{n-1}\circ \dots \circ f_1$ where the $f_n$ are diffeomorphisms that are $C^1$-close to identity. Hence we just need to prove proposition \ref{P4} for $f$ $C^1$-close to $Id_\Lc$.

 By \cite{Wei}, there exist a neighborhood $U$ of $\Lc$ in $M$, a neighborhood $V$ of $\Lc$ in $T^*\Lc$ and a symplectic diffeomorphism $\Phi:U\rightarrow V$ such that $\phi_{|\Lc}={\rm Id}_\Lc$.

 Let $q_0\in\Lc$ be any point of $\Lc$. We can choose a symplectic chart $(V_{q_0}, \phi_{q_0}=(q,p))$ at $q_0$ in $V$  such that $V_{q_0}\subset V$ and $\Lc\cap V_{q_0}=\{ p=0\}$. From now, we work in this chart and then when we write $f(q)$ this means ``$f(q)$ in chart''. Moreover, we can extend $f$ to $\R^d$ into $\tilde f$ that is $C^1$ close to identity in a smooth way. We introduce the following notations:
 $F_p(q)=F(q,p)=\tilde f(q).p$ for every $(q, p)\in \R^d\times \R^d$  and $\alpha:\R\rightarrow \R_+$ is an even function whose support is contained in $[-1, 1]$, constant in a neighborhood of $0$ and such that $\int\alpha=1$. We too use the notations: $\displaystyle{\forall (u_1, \dots, u_d)\in\R^d, \eta_t(u)=\prod_{i=1}^d\frac{1}{|t|}\alpha(\frac{u_i}{t})}$ and $\eta=\eta_1$.

 We define then a function $ A: V_{q_0}\rightarrow \R$ in coordinates in the following way; if $(q,p)\in V_{q_0}$, then
 \begin{enumerate}
 \item[$\bullet$] $A(q, 0)=0$;
 \item[$\bullet$] if $p\not=0$, then $A(q,p)=(\eta_{\| p\|}*F_p)(q)=\int_{\R^d} \eta_{\| p\|}(  q-u  )F_p(u)du$. \end{enumerate}
 \begin{lemma}\label{L4}
 The function $A$ is $C^2$ and $\forall q, \frac{\partial A}{\partial q}(q, 0)=0, \frac{\partial A}{\partial p}(q, 0)=\tilde f(q)$,\\ $\frac{\partial^2 A}{\partial p_i\partial p_j}(q,0)=\frac{\partial^2A}{\partial q_j^2}(q,0)=0$ and $\frac{\partial^2A}{\partial p_j\partial q_i}(q,0)=\frac{\partial f_j}{\partial q_i}(q)$.

 \end{lemma}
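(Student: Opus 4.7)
The plan is to change variables in the integral defining $A$ so that the dependence on $p$ is isolated, and then exploit the evenness of $\alpha$ (hence of $\eta$) to obtain the required cancellations at $p=0$. For $p\neq 0$, set $u = q - \|p\|v$ to rewrite
\[ A(q,p) = p \cdot g(q, \|p\|), \qquad g(q, t) := \int_{\R^d} \eta(v)\, \tilde f(q - t v)\, dv, \]
so that $g(q, 0) = \tilde f(q)$ and $g$ is of class $C^1$ in $(q, t)\in\R^d\times[0,1]$ (in fact $C^\infty$ in $q$ for all $t$, since $\eta$ is smooth with compact support). Because $\eta_{\|p\|}(q-u)$ depends smoothly on $(q, p)$ for $p\neq 0$, the original expression shows $A$ is $C^\infty$ on $\{p\neq 0\}$; only the behavior at $p = 0$ requires work.

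The key observation is that $\alpha$ being even makes $\eta$ symmetric under $v\mapsto -v$, so $\int\eta(v)\,v_i\,dv = 0$ for every $i$. Writing Taylor's formula with integral remainder for $\tilde f$ at $q$ and using this first-moment cancellation gives
\[ g(q, t) - \tilde f(q) = \int_{\R^d}\eta(v)\bigl[\tilde f(q-tv)-\tilde f(q)\bigr]dv = o(t) \]
as $t\to 0$, uniformly in $q$ on compacts (this uses only $\tilde f\in C^1$ with uniformly continuous derivative). Consequently
\[ A(q,p) = p\cdot\tilde f(q) + p\cdot\bigl[g(q,\|p\|)-\tilde f(q)\bigr] = p\cdot\tilde f(q) + o(\|p\|^2) \]
as $p\to 0$, uniformly in $q$. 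This Taylor expansion immediately yields the claimed values of $A(q,0)$, $\partial_p A(q,0) = \tilde f(q)$, and $\partial^2_{p_ip_j}A(q,0) = 0$.

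The $q$-derivatives are then essentially free: since $A(q,0)=0$ identically, $\partial_{q_i}A(q,0) = 0$ and $\partial^2_{q_iq_j}A(q,0) = 0$; while the mixed derivative is obtained by differentiating the identity $\partial_{p_k}A(q,0) = \tilde f_k(q)$ in $q$, giving $\partial^2_{q_ip_k}A(q,0) = \partial f_k/\partial q_i(q)$.

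The genuine obstacle is verifying that $A$ is \emph{of class} $C^2$ on a full neighborhood of $\Lc$, not merely twice differentiable at the points of $\Lc$. On $\{p\neq 0\}$ the second derivatives can be computed explicitly, and formal differentiation of the mollified integral naively produces factors of $\|p\|^{-2}$ that would blow up. These are controlled by applying the same evenness trick one step further: after integration by parts in $v$ to move derivatives off $\eta$ onto $\tilde f$, odd moments of $\eta$ against first-order Taylor remainders of $\partial\tilde f$ produce an $o(1)$ factor that exactly compensates the $\|p\|^{-1}$ growth, so that each second partial derivative of $A$ extends continuously to $p = 0$ with the values computed above. A short case analysis according to which derivative hits $\|p\|$, which hits $g$, and how many $q$- versus $p$-derivatives are taken completes the verification.
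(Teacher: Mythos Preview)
Your approach is essentially the paper's: both rewrite the convolution via $u=q-\|p\|v$ and then exploit the moment identities for $\eta$ (evenness gives $\int\eta(v)v_i\,dv=0$, and the paper also uses $\int\partial_i\eta=0$, $\int\partial_i\eta\cdot v_i=-1$) to show that each second partial of $A$---well-defined for $p\neq0$ because all derivatives can be placed on the smooth kernel---extends continuously to $p=0$ with the stated value. Your packaging via $g(q,t)$ and the expansion $A=p\cdot\tilde f(q)+o(\|p\|^2)$ is a tidy way to anticipate those values, but note that since $\tilde f$ is only $C^1$ the function $g$ is only $C^1$ in $t$, so the second $p$-derivatives cannot come from chain rule on $p\cdot g(q,\|p\|)$: one must, exactly as your last paragraph says and as the paper carries out term by term, go back to the mollified representation and keep one derivative on $\eta$ (so your phrase ``first-order Taylor remainders of $\partial\tilde f$'' should really read zeroth-order).
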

 \begin{proof}[Proof of lemma \ref{L4}]
 If $p\not=0$, observe that:
 $$A(q, p)=\frac{1}{\| p\|^d}\int_{\R^d}\eta\left( \frac{  q-u }{\| p\|}\right) (\tilde f(u).p)du=\int_{\R^d}\eta (  v )( \tilde f(q-\| p\|v).p)dv.$$
 Hence: $\displaystyle{\lim_{p\rightarrow 0, q\rightarrow q_0}A(q,p)=0=A(q,0)}$.

 Moreover, if $p\not=0$:
 $$\frac{\partial A}{\partial q}(q,p).\delta q=\int_{\R^d}\eta(v)(D\tilde  f(q-\| p\| v)\delta q).pdv;$$
$$\frac{\partial A}{\partial p}(q,p)=\int_{\R^d}\eta(v)\tilde f(q-\| p\| v)dv-\int_{\R^d}\eta(v)\left((D\tilde  f(q-\| p\| v)v).p\right)\frac{p}{\| p\|}dv;$$
 hence $\displaystyle{\lim_{p\rightarrow 0, q\rightarrow q_0}\frac{\partial A}{\partial q}(q,p)=0}$ and $\displaystyle{\lim_{p\rightarrow 0, q\rightarrow q_0}\frac{\partial A}{\partial p}(q,p)=\tilde f(q_0)}$; we deduce that $A$ is $C^1$ and
 $\frac{\partial A}{\partial p}(q,0)=\tilde f(q)$ and $\frac{\partial A}{\partial q}(q,0)=0$.

 For $p\not=0$, we can write too: $$\frac{\partial A}{\partial q_i}(q,p)=\int_{\R^d}\frac{1}{\| p\|^{d+1}}\frac{\partial\eta}{\partial q_i}(\frac{q-u}{\| p\|})(\tilde f(u).p)du=\left( \int_{\R^d}\frac{\partial\eta}{\partial q_i}(v)\tilde f(q-\| p\| v)dv\right) \frac{p}{\| p\|};$$
 and then:
 $$\frac{\partial^2A}{\partial q_j\partial q_i}(q,p)=\left( \int_{\R^d}\frac{\partial \eta}{\partial q_i}(v)\frac{\partial\tilde f}{\partial q_j}(q-\| p\|v)dv\right)\frac{p}{\| p\|}$$
 admits a continuous extension at $(q,0)$ in:
  $\frac{\partial^2A}{\partial q_j\partial q_i}(q,0)=0$.

  Moreover:$\frac{\partial^2A}{\partial p_j\partial q_i}(q,p)=$
$$ \frac{1}{\| p\|} \int_{\R^d}\frac{\partial\eta}{\partial q_i}(v)\tilde f_j(q-\| p\| v)dv-\int_{\R^d}\frac{\partial \eta}{\partial q_i}(v)(D\tilde f (q-\| p\|v).v)dv \frac{p_jp}{\| p\|^2}$$$$- \int_{\R^d}\frac{\partial\eta}{\partial q_i}(v)\tilde f(q-\| p\| v)dv\frac{p_jp}{\| p\|^3};$$
 Let us recall that
$\int_{\R^d}\frac{\partial \eta}{\partial q_i}(v)v_jdv=0$ for every $j\not=i$ and that $\int_{\R^d}\frac{\partial \eta}{\partial q_i}(v)v_idv=-1$. Let us now study the different terms of this sum:
\begin{enumerate}
\item $ \displaystyle{ \frac{1}{\| p\|} \int_{\R^d}\frac{\partial\eta}{\partial q_i}(v)\tilde f_j(q-\| p\| v)dv=   \int_{\R^d}\frac{\partial\eta}{\partial q_i}(v)\frac{\tilde f_j(q-\| p\| v)-\tilde f_j(q)}{\| p\|}dv}
$ is equal to:\break
$\displaystyle{-\int_{\R^d}\frac{\partial\eta}{\partial q_i}(v)  \left(D\tilde f_j(q).v+\varepsilon_q (\|p\| v) \right) dv
}$ where $\varepsilon_q$ designates a function that uniformly (in $q$) tends to $0$ at $0$. We have then:
$$\lim_{\| p\|\rightarrow 0}\frac{1}{\| p\|} \int_{\R^d}\frac{\partial\eta}{\partial q_i}(v)\tilde f_j(q-\| p\| v)dv=
-\int_{\R^d}\frac{\partial\eta}{\partial q_i}(v)   D\tilde f_j(q).v dv=\frac{\partial \tilde f_j}{\partial q_i}(q).$$
\item$\displaystyle{-\int_{\R^d}\frac{\partial \eta}{\partial q_i}(v)(D\tilde f (q-\| p\|v).v)dv \frac{p_jp}{\| p\|^2}}$\\$\displaystyle{= -\int_{\R^d}\frac{\partial \eta}{\partial q_i}(v)\left ((D\tilde f (q).v) +\beta_p(\| p\| v) \right)dv  \frac{p_jp}{\| p\|^2}}$ where $\beta_q$ designates a function that uniformly (in $q$) tends to $0$ at $0$.

 \item $\displaystyle{- \int_{\R^d}\frac{\partial\eta}{\partial q_i}(v)\tilde f(q-\| p\| v)dv\frac{p_jp}{\| p\|^3}=  \int_{\R^d}\frac{\partial\eta}{\partial q_i}(v)(D\tilde f(q)v+\alpha_q(\| p\| v))dv\frac{p_jp}{\| p\|^2}}$\break
where $\alpha_q$ designates a function that uniformly (in $q$) tends to $0$ at $0$.
\end{enumerate}
We have finally proved that:
$$\lim_{p\rightarrow 0}\frac{\partial^2A}{\partial p_j\partial q_i}(q,p)=\frac{\partial \tilde f_j}{\partial q_i}(q).$$

 In a similar way, we can rewrite the derivatives with respect to $p$: $ \frac{\partial A}{\partial p_j}(q,p)=$
 $$
-\int_{\R^d}\left(d.\eta\left( \frac{q-u}{\| p\|}\right)+D\eta\left( \frac{q-u}{\| p\|}\right) \frac{q-u}{\| p\|}\right)(\tilde f(u).\frac{p_jp}{\| p\|^{d+2}})du$$$$+\frac{1}{\| p\|^d}\int_{\R^d}\eta(\frac{q-u}{\| p\|})\tilde f_j(u)du$$
 that is equal to:
 $$-\int_{\R^d}\left( d.\eta(v)+D\eta (v).v\right)(\tilde f(q-\| p\|v).\frac{p_jp}{\| p\|^2})dv + \int_{\R^d}\eta (v)\tilde f_j(q-\| p\|v)dv.$$
 Let us recall that $d.\int_{\R^d}\eta(v)dv=-\int_{\R^d}D\eta(v)vdv=d$.
 We have:$\frac{\partial^2A}{\partial q_ip_j}(q,p)=$
 $$-\int_{\R^d}\left( d.\eta(v)+D\eta (v).v\right)(\frac{\partial \tilde f}{\partial q_i}(q-\| p\|v).\frac{p_jp}{\| p\|^2})dv+ \int_{\R^d}\eta (v)\frac{\partial \tilde f_j}{\partial q_i}(q-\| p\|v)dv;$$
 Using the fact that $$-\int_{\R^d}\left( d.\eta(v)+D\eta (v).v\right)\frac{\partial \tilde f}{\partial q_i}(q )dv=0,$$ we deduce that
 $\displaystyle{\lim_{p\rightarrow 0}\frac{\partial^2 A}{\partial q_i\partial p_j}(q,p)=\frac{\partial\tilde  f_j}{\partial q_i}(q)}$.

 Moreover,
 \begin{equation*}\begin{split} \frac{\partial^2A}{\partial p_j^2} (q,p )=&-\int_{\R^d}\eta(v)(D\tilde f_j(q-\| p\|v).v)\frac{p_j}{\| p\|}dv \\ & +
  \int_{\R^d}(d.\eta (v)+ D\eta (v).v)( \tilde f(q-\| p\| v)\frac{(2p_j^2-\| p\|^2)p}{\| p\|^4} \\ & +( D\tilde f(q-\| p\| v).v )\frac{p_j^2p}{\| p\|^3}-\tilde f_j(q-\| p\| v)\frac{p_j}{\| p\|^2})dv.\end{split}\end{equation*}
 The  integral  $-\int_{\R^d}\eta(v)(D\tilde f_j(q-\| p\|v).v) dv$ has for limit $-\int_{\R^d}\eta(v)(D\tilde f_j(q).v )dv=0$ then the limit of the first integral in the sum is zero. For the second integral, we use the fact that $\tilde f(q-\| p\| v)-\tilde f(q)=-(D\tilde f(q)\| p\| v+\|p\| \| v\| \varepsilon_q(\| p\|v)$ and $D\tilde f(q-\| p\| v)=D\tilde f(q)+\alpha_q(\| p\| v)$ where $\varepsilon_q$ and $\alpha_q$ tend (uniformly in $q$) to $0$ at $0$. We then obtain:
 $$\lim_{p\rightarrow 0}\frac{\partial^2A}{\partial p_j^2}(q,p)=0.$$
 In a similar way, we obtain: $\displaystyle{\lim_{p\rightarrow 0}\frac{\partial^2A}{\partial p_j\partial p_i}(q,p)=0}.$

 Finally, $A$ is $C^2$.\end{proof}
 We have finally built an function $A$ that is, in some neighbourhood of $q_0$, $C^2$-close to the function $(q,p)\mapsto p.q$ (because $f$ is close to identity). Using a smooth partition  of unity, it is easy to build a $C^2$ function $\Ac$ whose support is in $V$, that is $C^2$-close to $(p, q)\mapsto 0$ in the considered charts and such that
 $\frac{\partial \Ac}{\partial q}(q, 0)=0$ and $\frac{\partial \Ac}{\partial p}(q, 0)=(f(q)-q)$. Then $\Ac$ is the generating function of a $C^1$-symplectic diffeomrphism $F$ of $T^*\Lc$ that is identity outside $V$ and is defined in $V$ by: $F(q,p)=(Q,P)$ if and only if $Q-q=\frac{\partial\Ac}{\partial P}(q, P)$ and $p-P=\frac{\partial \Ac}{\partial q}(q,P)$. Using $\Phi$ and the fact that $F_{|(\R^d\times \R^d)\backslash V}=Id_V$, we built $G: M\rightarrow M$ symplectic that coincides with $\Phi^{-1}\circ F\circ \Phi$ in $U$ and $Id$ in $M\backslash U$. Then we have $G_{|\Lc}=f$.

\section*{Acknowledgments}   I am  grateful  to J.-C.~Alvarez-Paiva, T.~Barbot and C.~Vite\-rbo for stimulating discussions, and to P.~Bolle for explaining to me the construction of the surface  in corollary \ref{Co0}. I am grateful to the anonymous referee too.


\medskip
Received March 2013; revised July 2013.
\medskip

\end{document}